\documentclass[a4paper,english,fontsize=10pt,parskip=half,abstracton]{scrartcl}
\usepackage{babel}
\usepackage[utf8]{inputenc}
\usepackage[T1]{fontenc}
\usepackage[a4paper,left=20mm,right=20mm,top=30mm,bottom=30mm]{geometry}
\usepackage{amsmath}
\usepackage{amsthm}
\usepackage{amssymb}
\usepackage{enumerate}
\usepackage{hyperref}

\newtheorem{Theorem}{Theorem} 
\newtheorem{Lemma}[Theorem]{Lemma}
\newtheorem{Proposition}[Theorem]{Proposition}
\newtheorem{Corollary}[Theorem]{Corollary}
\newtheorem{Definition}[Theorem]{Definition}

\allowdisplaybreaks[1]

\renewcommand{\phi}{\varphi}
\newcommand{\C}{\operatorname{C}}
\newcommand{\Aut}{\operatorname{Aut}}
\newcommand{\GL}{\operatorname{GL}}
\newcommand{\Irr}{\operatorname{Irr}}
\newcommand{\tr}{\operatorname{tr}}

\mathchardef\ordinarycolon\mathcode`\:  
 \mathcode`\:=\string"8000
 \begingroup \catcode`\:=\active
   \gdef:{\mathrel{\mathop\ordinarycolon}}
 \endgroup

\title{Cartan matrices and Brauer's \\$k(B)$-Conjecture III}
\author{Benjamin Sambale}
\date{\today}

\begin{document}
\frenchspacing
\maketitle
\begin{abstract}\noindent
For a block $B$ of a finite group we prove that $k(B)\le(\det C-1)/l(B)+l(B)\le\det C$ where $k(B)$ (respectively $l(B)$) is the number of irreducible ordinary (respectively Brauer) characters of $B$, and $C$ is the Cartan matrix of $B$. As an application, we show that Brauer's $k(B)$-Conjecture holds for every block with abelian defect group $D$ and inertial quotient $T$ provided there exists an element $u\in D$ such that $\C_T(u)$ acts freely on $D/\langle u\rangle$. This gives a new proof of Brauer's Conjecture for abelian defect groups of rank at most $2$. We also prove the conjecture in case $l(B)\le 3$.
\end{abstract}

\textbf{Keywords:} Cartan matrix, Brauer's $k(B)$-Conjecture\\
\textbf{AMS classification:} 20C15, 20C20

\section{Introduction}
The present paper continues former work \cite{SambalekB,SambalekB2} by the author. We consider $p$-blocks $B$ of finite groups with respect to an algebraically closed field of characteristic $p$. Let $k(B)$ be the number of irreducible ordinary characters in $B$, and let $l(B)$ be the corresponding number of irreducible Brauer characters in $B$. Then the decomposition matrix $Q$ of $B$ has size $k(B)\times l(B)$ and gives a connection between the ordinary characters and the Brauer characters. It is known that $Q$ is a non-negative integral matrix such that every row contains at least one non-zero entry. On the other hand, the Cartan matrix $C$ of $B$ has a unique largest elementary divisor $p^d$ which coincides with the order of a defect group of $B$. 

The main theme of this article is the investigation of the relation between $k(B)$ and $p^d$ coming from the matrix factorization $C=Q^\textnormal{T}Q$ (here $Q^\textnormal{T}$ denotes the transpose of $Q$). This is motivated by a sixty years old conjecture by Richard Brauer~\cite{Brauerkb} which asserts that $k(B)\le p^d$. 

In the first part we study properties of $Q$ which eventually lead to an upper bound on $k(B)$ in terms of the determinant of $C$. This is of interest, since $\det C$ is determined locally via lower defect groups. As a natural next step we analyze the sharpness of this bound. Similar ideas lead to improvements of results by Olsson~\cite{OlssonIneq} and Brandt~\cite{Brandt}.
Finally, in the last section we apply these ideas to major subsections, and in particular, to blocks with abelian defect groups. Most of the notation is standard and can be found in Feit's book \cite{Feit} for instance. We denote a cyclic group of order $n$ by $Z_n$, and for convenience, $Z_n^m:=Z_n\times\ldots\times Z_n$ ($m$ factors).

\section{Determinants of Cartan matrices}

It is well known that the decomposition matrix $Q$ of a block $B$ of a finite group does not have block diagonal shape. We show that this remains true if we consider $Q$ with respect to an arbitrary basic set. This is a partial answer to a question raised in \cite{SambalekB} which suffices for our purpose. Recall that a \emph{basic set} is a basis for the $\mathbb{Z}$-module of generalized Brauer characters (see \cite[p. 148]{Feit}). The decomposition matrix with respect to a different basic set can be expressed as $QS$ where $S\in\GL(l(B),\mathbb{Z})$. 

\begin{Definition}
A matrix $Q\in\mathbb{Z}^{k\times l}$ is \emph{decomposable} if there exists a matrix $S\in\GL(l,\mathbb{Z})$ such that $QS=\bigl(\begin{smallmatrix}M_1&0\\0&M_2\end{smallmatrix}\bigr)$ where $M_1\in\mathbb{Z}^{k'\times l'}$ and $M_2\in\mathbb{Z}^{(k-k')\times (l-l')}$ for some $0<k'<k$ and $0<l'<l$. Otherwise, $Q$ is \emph{indecomposable}. 
\end{Definition}

\begin{Proposition}\label{con}
The decomposition matrix of a block of a finite group is indecomposable.
\end{Proposition}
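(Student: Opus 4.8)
The plan is to route everything through the \emph{contribution matrix} $M:=QC^{-1}Q^{\textnormal{T}}\in\mathbb{Q}^{k\times k}$, where $k=k(B)$ and $C=Q^{\textnormal{T}}Q$. Its decisive feature is that it is independent of the basic set: replacing $Q$ by $QS$ with $S\in\GL(l,\mathbb{Z})$ turns $C$ into $S^{\textnormal{T}}CS$, and $(QS)(S^{\textnormal{T}}CS)^{-1}(QS)^{\textnormal{T}}=QC^{-1}Q^{\textnormal{T}}=M$. Since the rows of $Q$ are the coordinate vectors of the $\chi\in\Irr(B)$ with respect to the orthonormal basis $\Irr(B)$, the matrix $M$ is the orthogonal projection onto the column space of $Q$, which is exactly the space of class functions of $B$ vanishing on $p$-singular elements. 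A short computation then identifies the entries as $M_{\chi\psi}=\frac{1}{|G|}\sum_{g}\chi(g)\overline{\psi(g)}$, the sum running over the $p$-regular elements $g\in G$.

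Assume for contradiction that $Q$ is decomposable, say $QS=\bigl(\begin{smallmatrix}M_1&0\\0&M_2\end{smallmatrix}\bigr)$, and split $\Irr(B)=I_1\sqcup I_2$ according to the two row blocks, so that $I_1$ is a proper nonempty subset. Writing $C_i:=M_i^{\textnormal{T}}M_i$ (invertible, as $Q$ has full column rank), the basic-set invariance gives $M=\operatorname{diag}\bigl(M_1C_1^{-1}M_1^{\textnormal{T}},\,M_2C_2^{-1}M_2^{\textnormal{T}}\bigr)$. Hence $M_{\chi\psi}=0$ whenever $\chi\in I_1$ and $\psi\in I_2$; equivalently $\sum_{g}\chi(g)\overline{\psi(g)}=0$ over $p$-regular $g$ for all such pairs.

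Next I would convert this vanishing into a splitting of the block idempotent. Put $f_i:=\sum_{\chi\in I_i}e_\chi$, so $f_1,f_2$ are orthogonal central idempotents of $\mathbb{C}G$ with $f_1+f_2=e_B$. Let $1_{\mathrm{reg}}$ and $1_{\mathrm{sing}}$ denote restriction of a class function to the $p$-regular, respectively $p$-singular, elements. The vanishing cross entries of $M$ say precisely that $(f_1)1_{\mathrm{reg}}$ and $(f_2)1_{\mathrm{reg}}$ are orthogonal; as $f_1\perp f_2$ and the regular and singular parts have disjoint support, the singular parts $(f_1)1_{\mathrm{sing}}$ and $(f_2)1_{\mathrm{sing}}$ are orthogonal as well. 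On the other hand, Osima's theorem asserts that $e_B$ is supported on $p$-regular elements, so $(f_1)1_{\mathrm{sing}}=-(f_2)1_{\mathrm{sing}}$. Combining the two facts forces $\lVert(f_1)1_{\mathrm{sing}}\rVert^2=0$, i.e.\ the idempotent $f_1$ is itself supported on $p$-regular elements.

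It then remains to recall the standard fact (a consequence of Osima's theorem, see \cite{Feit}) that $\sum_{\chi\in T}e_\chi$ is supported on $p$-regular elements only when $T\subseteq\Irr(G)$ is a union of $p$-blocks. Applied to $T=I_1$ this contradicts $I_1$ being a proper nonempty subset of the single block $\Irr(B)$, completing the argument. I expect the last step to be the main obstacle, and it is where genuine block theory is unavoidable: mere connectedness of the ordinary decomposition matrix does \emph{not} suffice, since for instance $\bigl(\begin{smallmatrix}1&1\\0&1\end{smallmatrix}\bigr)$ is connected yet decomposable. Ruling out a zero block of the (basic-set independent) matrix $M$ requires Osima's theorem together with the characterization of block idempotents by their $p$-regular support.
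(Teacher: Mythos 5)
Your first half is exactly the paper's argument: you pass to the basic-set-independent contribution matrix $M=QC^{-1}Q^{\textnormal{T}}$, observe that a block-diagonal $Q$ forces $m_{\chi\psi}=0$ for all $\chi\in I_1$, $\psi\in I_2$, and your identification $m_{\chi\psi}=\tfrac{1}{|G|}\sum_{g\,p\text{-regular}}\chi(g)\overline{\psi(g)}$ is correct. Where you diverge is the finisher. The paper closes in one line: pick $\chi\in I_1$ of height $0$ (after relabelling) and any $\psi\in I_2$; then $m_{\chi\psi}=0$ contradicts \cite[Theorem~V.9.5]{Feit}, which says a height-zero character has non-zero contribution with \emph{every} character of the block. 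Note that this uses precisely the pairwise vanishing you already have in hand.

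Your route instead aggregates the pairwise vanishing into a single condition — via the nice observation that orthogonality of the $p$-regular parts of $f_1,f_2$, orthogonality of $f_1,f_2$ themselves, and Osima's theorem for $e_B$ together force $f_1=\sum_{\chi\in I_1}e_\chi$ to be supported on $p$-regular elements — and then you invoke the claim that a sum $\sum_{\chi\in T}e_\chi$ with $p$-regular support must be a union of blocks. That last claim is the genuine gap. It is \emph{not} a consequence of Osima's theorem (Osima gives the forward direction, support of block idempotents; you need the converse), and it is not in Feit's book, so the citation does not carry it. Moreover it is not recoverable by soft arguments: writing $\theta_T=\sum_{\chi\in T}\chi(1)\chi$, your condition says only that the single vector of degrees supported on $I_1$ lies in the column space of $Q$, and pure linear algebra is consistent with that for proper subsets (e.g.\ if $C$ were the identity), so genuine block theory — Brauer's second main theorem or the height-zero contribution theorem — must enter at exactly this point. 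In effect you have traded the strong information you had (all cross-contributions vanish) for a weaker aggregated statement, and then need a theorem at least as deep as the proposition itself to finish; the converse characterization you want is indeed a true classical result (essentially Osima's characterization of blocks, provable from the second main theorem and found in the literature, e.g.\ Nagao--Tsushima, rather than in \cite{Feit}), so the proof can be repaired by citing it properly — but as written the step that carries all the block-theoretic weight is unjustified. The simplest fix is to revert to the pairwise statement and quote \cite[Theorem~V.9.5]{Feit} as the paper does.
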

\begin{proof}
Let $B$ be a block of a finite group with decomposition matrix $Q$ and Cartan matrix $C=Q^\textnormal{T}Q$. 
Assume that $Q$ is decomposable. Then, after changing the basic set, we may assume that $Q=\bigl(\begin{smallmatrix}Q_1&0\\0&Q_2\end{smallmatrix}\bigr)$. We consider the contribution matrix $M:=(m_{ij})=QC^{-1}Q^\textnormal{T}$ which does not depend on the basic set. Let $\chi\in\Irr(B)$ be a character of height $0$. Without loss of generality, suppose that $\chi$ corresponds to a row of $Q_1$. Choose any character $\psi\in\Irr(B)$ which corresponds to a row of $Q_2$. Since $C^{-1}$ also has block diagonal shape, $m_{\chi\psi}=0$. This contradicts \cite[Theorem~V.9.5]{Feit}.
\end{proof}

\begin{Lemma}\label{lem}
Let $Q$ be an integral $k\times l$ matrix without vanishing rows. Then $\det (Q^{\textnormal{T}}Q)=0$ or
\[\det (Q^{\textnormal{T}}Q)\ge k-l+1\]
\end{Lemma}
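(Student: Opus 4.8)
The plan is to exploit the Cauchy--Binet formula, which expresses $\det(Q^{\textnormal{T}}Q)$ as a sum of squares of the maximal minors of $Q$. First I would dispose of the degenerate case: if $Q$ does not have full column rank $l$ (in particular if $k<l$), then $Q^{\textnormal{T}}Q$ is singular and $\det(Q^{\textnormal{T}}Q)=0$, so we are in the first alternative. Hence assume $\operatorname{rank}Q=l$, so that $k\ge l$ and, writing $Q_S$ for the $l\times l$ submatrix of $Q$ formed by a set $S$ of $l$ rows, Cauchy--Binet gives $\det(Q^{\textnormal{T}}Q)=\sum_S(\det Q_S)^2$, where $S$ ranges over all $l$-element subsets of the $k$ rows.

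Since each $\det Q_S$ is an integer, every nonzero minor contributes at least $1$ to this sum. Thus it suffices to exhibit $k-l+1$ distinct row-sets $S$ with $\det Q_S\ne 0$. Because $Q$ has rank $l$, some $l$ rows are linearly independent; let $S_0$ index such a set, furnishing one nonzero minor. The remaining $k-l$ rows are where the hypothesis of no vanishing row enters: for a row $v\notin S_0$, the rows indexed by $S_0$ form a basis of $\mathbb{R}^l$, so $v=\sum_{s\in S_0}c_s q_s$ (with $q_s$ the $s$-th row), and some coefficient $c_j\ne 0$ precisely because $v\ne 0$. By the Steinitz exchange lemma, replacing $q_j$ by $v$ preserves linear independence, so $S_v:=(S_0\setminus\{j\})\cup\{v\}$ yields a further nonzero minor.

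It remains to verify that these sets are pairwise distinct. Each $S_v$ contains $v\notin S_0$, so $S_v\ne S_0$; and for $v\ne v'$ one has $v\notin S_{v'}$ (since $v\notin S_0$ and $v\ne v'$), so $S_v\ne S_{v'}$. Together with $S_0$ this produces $1+(k-l)=k-l+1$ distinct subsets, each contributing at least $1$ to the Cauchy--Binet sum, whence $\det(Q^{\textnormal{T}}Q)\ge k-l+1$.

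I expect the only real content to lie in the exchange step, as it is exactly there that the nonvanishing of every row is used; the subtlety is to keep the exchanged sets pairwise distinct, so that their squared minors are genuinely counted as separate terms. Everything else is bookkeeping around Cauchy--Binet and the integrality of the minors.
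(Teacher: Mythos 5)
Your proof is correct and follows essentially the same route as the paper: Cauchy--Binet, one nonzero $l\times l$ minor from a maximal independent set of rows, and then, for each of the remaining $k-l$ rows, a swapped row-set giving a further nonzero minor, with distinctness guaranteed because each swapped set contains a different row outside the base set. The only cosmetic difference is that you justify the swap directly via Steinitz exchange (using that the row is nonzero), whereas the paper reaches the same conclusion by a contradiction argument showing that a row lying in the span of every $(l-1)$-subset of the base rows would have to vanish.
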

\begin{proof}
Let $Q=(q_{ij})$. 
By the Cauchy-Binet formula (see e.\,g. \cite[Theorem 13.8.2]{Harville}) we have
\begin{equation}\label{eq}
\det (Q^{\textnormal{T}}Q)=\sum_{\substack{V\subseteq\{1,\ldots,k\},\\|V|=l}}{(\det Q_V^\textnormal{T})}(\det Q_V)=\sum_{\substack{V\subseteq\{1,\ldots,k\},\\|V|=l}}{(\det Q_V)^2}
\end{equation}
where $Q_V:=(q_{ij}:i\in V,j=1,\ldots,l)$. We may assume that $\det Q_V\ne 1$ for some $V$, say $V=\{1,\ldots,l\}$. Now consider a row $r_j$ of $Q$ for $l<j\le k$. Suppose that $\det Q_{V'}=0$ for all $V'\subseteq\{1,\ldots,k\}$ such that $j\in V'$ and $|V'\cap\{1,\ldots,l\}|=l-1$. Then $r_j$ can be expressed by a rational linear combination of any $l-1$ rows taken from the first $l$ rows. Since the first $l$ rows of $Q$ are linearly independent, this gives the contradiction $r_j=0\in\mathbb{Z}^l$. Hence we can find a subset $V'$ as above such that $\det Q_{V'}\ne 0$. Since this can be done for every $j$ with $l<j\le k$, the claim follows from \eqref{eq}.
\end{proof}

\begin{Lemma}\label{schwer}
Let $Q\in\mathbb{Z}^{k\times l}$ be an indecomposable matrix of rank $l$ without vanishing rows. Let $C:=Q^\textnormal{T}Q$. Then 
\[\det C\ge l(k-l)+1\]
and
\[\min\{\det(C)xC^{-1}x^\textnormal{T}:0\ne x\in\mathbb{Z}^l\}\ge l.\]
\end{Lemma}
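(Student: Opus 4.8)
The plan is to prove both inequalities together by induction on $l$, the two determinant identities being consequences of the Cauchy--Binet formula together with the matrix determinant lemma. For $0\ne x\in\mathbb Z^l$ set $\hat Q=\bigl(\begin{smallmatrix}Q\\x\end{smallmatrix}\bigr)$; then $\det(C)xC^{-1}x^\textnormal{T}=\det(\hat Q^\textnormal{T}\hat Q)-\det C=\sum_{|V|=l-1}\bigl(\det\bigl(\begin{smallmatrix}Q_V\\x\end{smallmatrix}\bigr)\bigr)^2$ is a positive integer, and it only decreases when $x$ is replaced by a proper multiple, so I may assume $x$ primitive. Applying a suitable $S\in\GL(l,\mathbb Z)$ — which affects neither indecomposability nor this quantity — I may further assume $x=(1,0,\ldots,0)$, whereupon the sum collapses to $\det(C)(C^{-1})_{11}=\det\bigl((Q')^\textnormal{T}Q'\bigr)$, where $Q'\in\mathbb Z^{k\times(l-1)}$ is obtained from $Q$ by deleting the first column and has rank $l-1$.

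If $Q'$ has no vanishing rows and is indecomposable, the first inequality at level $l-1$ gives $\det((Q')^\textnormal{T}Q')\ge(l-1)(k-(l-1))+1\ge l$ because $k\ge l$, which is exactly the second inequality. The substance of the argument is the degenerate situation: the vanishing rows of $Q'$ are precisely the rows of $Q$ parallel to $e_1$, and if too many of these occur (or if $Q'$ decomposes) the inductive bound becomes too weak. Here I would show that such a degeneracy, together with $\det((Q')^\textnormal{T}Q')<l$, forces an integral splitting of $Q$ itself: one uses the block structure of $Q'$ and clears the first coordinate by integral column operations to exhibit an $S\in\GL(l,\mathbb Z)$ putting $Q$ in block-diagonal form, contradicting indecomposability. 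This lifting step is the crux, and it is exactly here that integrality matters — a rational splitting of the row space need not be a splitting of the lattice $\mathbb Z^l$, which is what distinguishes indecomposability from mere connectedness of the associated matroid.

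For the first inequality I then induct on the number of rows $k$, the base $k=l$ being the trivial $\det C\ge 1=l\cdot 0+1$. For $k>l$ I remove a row $r_j$ not needed for the rank; the same identity, read as $\det C=\det((Q'')^\textnormal{T}Q'')+\det(C)r_jC^{-1}r_j^\textnormal{T}$ with $Q''$ the matrix without row $j$, bounds the last summand below by $l$ thanks to the (now available) second inequality applied to the nonzero vector $r_j$. Provided $Q''$ remains indecomposable of rank $l$, the inductive hypothesis gives $\det((Q'')^\textnormal{T}Q'')\ge l(k-1-l)+1$, and adding $l$ closes the induction. The remaining obstacle is the choice of $r_j$: I must argue that an indecomposable matrix with $k>l$ rows always admits a removable row keeping it indecomposable of full rank (failing which I would treat the decomposable $Q''$ directly, absorbing the contribution of the connecting row $r_j$). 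I expect these two indecomposability-bookkeeping steps — the lattice-splitting in the degenerate column-deletion case, and the existence of a good row to delete — to be the hard part, the determinant identities themselves being routine.
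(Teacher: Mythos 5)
Your setup is sound as far as it goes --- the Cauchy--Binet/matrix-determinant-lemma identities, the reduction to a primitive $x$, and the transformation to $x=e_1$ all match the paper --- but the two steps you yourself defer as ``the crux'' are where the entire content of the lemma sits, and in both places your proposed route breaks down. For the first inequality, the hoped-for claim that an indecomposable $Q$ with $k>l$ rows always admits a row whose deletion leaves an indecomposable matrix of rank $l$ is simply false: $Q=\bigl(\begin{smallmatrix}1&0\\0&1\\1&1\end{smallmatrix}\bigr)$ is indecomposable, yet deleting any one of its three rows leaves a matrix that is $\GL(2,\mathbb{Z})$-equivalent to the identity, hence decomposable. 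So ``treat the decomposable $Q''$ directly'' is not a fallback but the main case, and it is exactly what the paper's induction consists of: delete the first row, split the remainder into indecomposable blocks $P_i\in\mathbb{Z}^{k_i\times l_i}$, apply \emph{both} inductive claims to each block (indecomposability of $Q$ guarantees each component $r_i$ of the deleted row is nonzero, whence $\det(P_i^\textnormal{T}P_i)r_i(P_i^\textnormal{T}P_i)^{-1}r_i^\textnormal{T}\ge l_i$), and close with the superadditivity inequality $1+\sum\alpha_i\le\prod(\alpha_i+1)$. None of this combinatorial core appears in your proposal. (A minor slip: your identity should read $\det C=\det C''+\det(C'')r_j(C'')^{-1}r_j^\textnormal{T}$ with $C''=(Q'')^\textnormal{T}Q''$, not with $C$ in the last term.)

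For the second inequality the gap is more serious: in the degenerate case, the assertion ``$\det((Q')^\textnormal{T}Q')<l$ forces an integral splitting of $Q$'' \emph{is} the contrapositive of the inequality you are proving, so ``I would show'' with no mechanism is a restatement of the goal, not a sketch. Moreover the integrality obstruction you correctly flag is real and cannot be cleared by integral column operations alone: $\bigl(\begin{smallmatrix}2&1\\0&1\\0&1\end{smallmatrix}\bigr)$ is indecomposable even though its first column has a single nonzero entry. The paper circumvents both problems with two devices absent from your plan: (i) a preliminary reduction to the case where all elementary divisors of $Q$ equal $1$ (divide columns by their gcd's; both quantities in the lemma move in the right direction), after which a column of $QS$ with a single nonzero entry genuinely contradicts indecomposability; and (ii) for the minimum itself, no induction on $l$ and no indecomposability of the column-deleted matrix $\tilde Q$ at all: with $t\,(\ge l)$ denoting the number of nonzero rows of $\tilde Q$, it applies Lemma~\ref{lem} (which requires no indecomposability) to each matrix $\tilde Q_i$ obtained by deleting one nonzero row, and averages via Cauchy--Binet to get $m=\frac{1}{t-l+1}\sum_{i=1}^t\det(\tilde Q_i^\textnormal{T}\tilde Q_i)\ge t\ge l$. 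Your induction-on-$l$ architecture forces you to confront inheritance of indecomposability under column deletion, which is precisely what fails; the paper's architecture is built so that it never needs it.
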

\begin{proof}
In the first step we reduce the situation such that all elementary divisors of $Q$ are $1$. Certainly, we can replace $Q$ by $QS$ where $S\in\GL(l,\mathbb{Z})$. Now assume that the greatest common divisor $d$ of the entries in the first column of $Q$ is greater than $1$. Dividing this column by $d$ gives a new matrix $\widetilde{Q}$ with the same $k$ and $l$, but $d^2\det \widetilde{C}=\det C$ where $\widetilde{C}:=\widetilde{Q}^\textnormal{T}\widetilde{Q}$. If $C^{-1}=(c_{ij})$ and $\widetilde{C}^{-1}=(\widetilde{c}_{ij})$, then $\widetilde{c}_{ij}=d^{\delta_{1i}+\delta_{1j}}c_{ij}$. 
For $x=(x_1,\ldots,x_l)\in\mathbb{Z}^l$ let $\widetilde{x}:=(x_1,dx_2,dx_3,\ldots,dx_l)$. Then
\[\det(C)xC^{-1}x^\textnormal{T}=\det(\widetilde{C})\widetilde{x}\widetilde{C}^{-1}\widetilde{x}^\textnormal{T}.\]
In particular,
\[\min\{\det(\widetilde{C})x\widetilde{C}^{-1}x^\textnormal{T}:0\ne x\in\mathbb{Z}^l\}\le \min\{\det(C)xC^{-1}x^\textnormal{T}:0\ne x\in\mathbb{Z}^l\}.\]
Hence, after replacing $Q$ by $\widetilde{Q}$ and repeating this process, we may assume that all elementary divisors of $Q$ equal $1$.

Now we argue by induction on $k$. In case $k=1$ we have $l=1$ and the result is obvious. Let $k\ge 2$.
Let $r$ be the first row of $Q$, and let $Q_1$ be the matrix obtained from $Q$ by removing $r$. If $Q_1$ has rank less than $l$, replace $Q$ by $QS$ ($S\in\GL(l,\mathbb{Z})$) such that at least one column of $Q_1$ vanishes. This means that one column of $Q$ has only one non-zero entry. Since all elementary divisors of $Q$ are $1$, this entry must be $\pm1$. But then $Q$ is decomposable. Thus, we have shown that $Q_1$ has rank $l$. We decompose $Q_1$ in the following form
\[Q_1=\begin{pmatrix}
P_1&&0\\&\ddots&\\0&&P_s
\end{pmatrix}\]
where $P_i\in\mathbb{Z}^{k_i\times l_i}$ and $\sum k_i=k-1$ and $\sum l_i=l$. Then every $P_i$ has rank $l_i$ and no vanishing rows. Moreover, we may assume that $P_i$ is indecomposable for $i=1,\ldots,s$. 
Let $C_1:=Q_1^\textnormal{T}Q_1$.
By induction we have
\[\det C_1=\prod_{i=1}^s{\det (P_i^\textnormal{T}P_i)}\ge \prod_{i=1}^s{\bigl(l_i(k_i-l_i)+1\bigr)}.\]
Moreover, a variation of Sylvester's determinant formula (see e.\,g. \cite[Theorem~18.1.1]{Harville}) shows that
\[\det C=\det(C_1+r^\textnormal{T}r)=\det C_1+\det(C_1)rC_1^{-1}r^\textnormal{T}.\]
According to the decomposition of $Q_1$, we can decompose $r=(r_1,\ldots,r_s)$ such that $r_i\in\mathbb{Z}^{l_i}$. By the hypothesis, $r$ is non-zero. Since $Q$ is indecomposable, even each $r_i$ is non-zero. By induction, $\det(P_i^\textnormal{T}P_i)r_i(P_i^\textnormal{T}P_i)^{-1}r_i^\textnormal{T}\ge l_i$. Therefore, 
\[\det C\ge\prod_{i=1}^s{\bigl(l_i(k_i-l_i)+1\bigr)}+\sum_{i=1}^s{l_i\prod_{j\ne i}{\bigl(l_j(k_j-l_j)+1\bigr)}}.\]
For any non-negative integers $\alpha_1,\ldots,\alpha_t$ we have the trivial inequality $1+\sum{\alpha_i}\le\prod{(\alpha_i+1)}$. We apply this twice and obtain
\begin{align*}
l(k-l)+1&=1+\Bigl(\sum_{i=1}^s{l_i}\Bigr)\Bigl(1+\sum_{i=1}^s{(k_i-l_i)}\Bigr)=1+\sum_{i=1}^s{l_i(k_i-l_i)}+\sum_{i=1}^s{l_i\Bigl(1+\sum_{j\ne i}{(k_j-l_j)}\Bigr)}\\
&\le \prod_{i=1}^s{\bigl(l_i(k_i-l_i)+1\bigr)}+\sum_{i=1}^s{l_i\prod_{j\ne i}{\bigl(l_j(k_j-l_j)+1\bigr)}}\le\det C.
\end{align*}
This proves the first claim.

For the second claim choose $\widetilde{x}\in\mathbb{Z}$ such that
\[m:=\det(C)\widetilde{x}C^{-1}\widetilde{x}^\textnormal{T}=\min\{\det(C)xC^{-1}x^\textnormal{T}:0\ne x\in\mathbb{Z}^l\}.\]
Obviously, the entries of $\widetilde{x}$ are coprime. It is well known that there exists a matrix $S\in\GL(l,\mathbb{Z})$ such that the first row of $S$ coincides with $\widetilde{x}$ (see e.\,g. \cite[Corollary~II.1]{Newman}). After replacing $Q$ by $QS^{-1}$, the first cofactor of $C$ coincides with $m$. Let $\widetilde{Q}$ be the matrix obtained from $Q$ by removing the first column. Then $m=\det(\widetilde{Q}^\textnormal{T}\widetilde{Q})$. Let $t$ be the number of non-zero rows of $\widetilde{Q}$.
We may assume that these are the first $t$ rows of $\widetilde{Q}$.
Suppose that $t\le l-1$. Then we can achieve as above that one column of $Q$ has only one non-zero entry. This gives a contradiction as before. Hence, $t\ge l$. For $i=1,\ldots,t$, let $\widetilde{Q}_i$ be the matrix 
consisting of the first $t$ rows of $\widetilde{Q}$ except the $i$-th row.
By the same argument as before, $\det(\widetilde{Q}_i^\textnormal{T}\widetilde{Q}_i)>0$. Hence, Lemma~\ref{lem} implies $\det(\widetilde{Q}_i^\textnormal{T}\widetilde{Q}_i)\ge t-l+1$.
Moreover, every $(l-1)\times(l-1)$ submatrix of $\widetilde{Q}_i$ shows up in exactly $t-l+1$ matrices $\widetilde{Q}_j$ (including $j=i$).
Therefore, the Cauchy-Binet formula yields
\[m=\det(\widetilde{Q}^\textnormal{T}\widetilde{Q})=\frac{1}{t-l+1}\sum_{i=1}^t{\det(\widetilde{Q}_i^\textnormal{T}\widetilde{Q}_i)}\ge t\ge l.\]
This completes the proof.
\end{proof}

Now we prove our main theorem which generalizes \cite[Theorem~1]{SambalekB} in two different directions.

\begin{Theorem}\label{main}
Let $B$ be a block of a finite group with Cartan matrix $C$. Then
\[k(B)\le\frac{\det C-1}{l(B)}+l(B)\le\det C.\]
\end{Theorem}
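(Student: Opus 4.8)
The plan is to apply Lemma~\ref{schwer} to the decomposition matrix $Q$ of $B$. First I would record that $Q\in\mathbb{Z}^{k(B)\times l(B)}$ has no vanishing rows (every irreducible character of $B$ has at least one non-zero decomposition number), has rank $l(B)$ (since $\det C=\det(Q^\textnormal{T}Q)\neq 0$ because the Cartan matrix is non-singular), and is indecomposable by Proposition~\ref{con}. Hence Lemma~\ref{schwer} applies verbatim and yields $\det C\ge l(B)\bigl(k(B)-l(B)\bigr)+1$. Writing $k:=k(B)$ and $l:=l(B)$ and dividing the rearranged inequality $\det C-1\ge l(k-l)$ by $l\ge 1$ gives $k\le(\det C-1)/l+l$, which is exactly the first asserted inequality.

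For the second inequality I would observe that it is purely arithmetic: clearing the denominator $l>0$, the inequality $(\det C-1)/l+l\le\det C$ is equivalent to $(l-1)\bigl(\det C-l-1\bigr)\ge 0$. When $l=1$ the left factor vanishes and the inequality holds trivially, so there is nothing to prove. When $l\ge 2$ it reduces to the single claim $\det C\ge l+1$. To establish this I would note that $k>l$ in this case: if $l(B)\ge 2$ then $B$ cannot have defect zero (a defect-zero block has $k(B)=l(B)=1$), so its defect group $D$ is non-trivial, and Brauer's theory of subsections gives $k(B)=\sum_{(u,b_u)}l(b_u)=l(B)+\sum_{1\ne u}l(b_u)>l(B)$, since the subsection attached to any non-trivial $p$-element $u\in D$ contributes a strictly positive term $l(b_u)\ge 1$. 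With $k-l\ge 1$, Lemma~\ref{schwer} immediately yields $\det C\ge l(k-l)+1\ge l+1$, as required. As a by-product the whole chain gives $k(B)\le\det C$.

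The arithmetic manipulations are routine, so the only genuine ingredient beyond Lemma~\ref{schwer} is the strict inequality $k(B)>l(B)$ for blocks of positive defect. I expect the main point to get right here is precisely that one must invoke this block-theoretic input rather than hope for an abstract bound: for a general indecomposable integral matrix $Q$ of rank $l$ without vanishing rows the estimate $\det C\ge l+1$ can fail (in the square case $k=l$, Lemma~\ref{schwer} only guarantees $\det C\ge 1$), so it is essential that the decomposition matrix of a block of positive defect is never square. Confirming that the subsection count legitimately forces $k>l$ under the hypothesis $l(B)\ge 2$ is therefore the only delicate step.
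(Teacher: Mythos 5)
Your proposal is correct and takes essentially the same route as the paper: Proposition~\ref{con} combined with Lemma~\ref{schwer} gives the first inequality, and the second inequality is reduced to $\det C\ge l(B)+1$ via the fact that $l(B)<k(B)$ whenever $l(B)\ge 2$. The only difference is that you supply an explicit subsection-counting argument for $k(B)>l(B)$, which the paper dismisses as ``well known''.
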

\begin{proof}
Let $Q\in\mathbb{Z}^{k(B)\times l(B)}$ be the decomposition matrix of $B$ such that $Q^\textnormal{T}Q=C$. By Proposition~\ref{con}, $Q$ is indecomposable. Hence, the first inequality follows from Lemma~\ref{schwer}. For the second inequality we may assume that $l(B)>1$. Then it is well known that $l(B)<k(B)$. Thus by Lemma~\ref{schwer}, $l(B)\le\det C-1$. 
Now the second inequality follows easily.
\end{proof}

Recall that a \emph{subsection} for a block $B$ of a finite group $G$ is a pair $(u,b)$ where $u\in G$ is a $p$-element and $b$ is a Brauer correspondent of $B$ in $\C_G(u)$. 
A result of Fujii~\cite[Corollary~1]{DetCartan} states that $\det C=p^d$ where $C$ is the Cartan matrix of a block $B$ with defect $d$ provided all non-trivial $B$-subsections $(u,b)$ satisfy $l(b)=1$. If this criterion holds, a result by Robinson~\cite[Theorem~3.4]{RobinsonNumber} already implies $k(B)\le p^d$. However, the condition $\det C=p^d$ is more general as one can see by the following example: Take a non-principal $3$-block of $Z_3^2\rtimes Q_8$ where the kernel of the action of $Q_8$ on $Z_3^2$ has order $2$ (see \cite[p. 40]{Kiyota}). Then $l(B)=1$, but $l(b)=2$ for a $B$-subsection $(u,b)$. 

Our next result concerns the sharpness of Theorem~\ref{main}.

\begin{Proposition}\label{detdefect}
Let $B$ be a $p$-block of a finite group with defect $d$ and Cartan matrix $C$. 
Suppose that 
\[k(B)=\frac{\det C-1}{l(B)}+l(B).\]
Then the following holds:
\begin{enumerate}[(i)]
\item $\det C=p^d$,
\item $C=(m+\delta_{ij})_{i,j}$ up to basic sets where $m:=(p^d-1)/l(B)$,
\item all irreducible characters of $B$ have height $0$.
\end{enumerate} 
\end{Proposition}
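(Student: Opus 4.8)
The hypothesis is precisely the equality case of the first bound in Lemma~\ref{schwer}: writing $l:=l(B)$, $k:=k(B)$ and $m:=k-l$, the assumption reads $\det C=l(k-l)+1=lm+1$, which is equality in $\det C\ge l(k-l)+1$. The plan is therefore to revisit the proof of Lemma~\ref{schwer} and extract what equality forces, the goal being to show that the decomposition matrix $Q$ is equivalent --- after a change of basic set (i.e. column operations from $\GL(l,\mathbb{Z})$) together with permutations and sign changes of rows, none of which affect $C$ up to congruence --- to the $(l+m)\times l$ matrix consisting of an $l\times l$ identity block sitting above $m$ rows each equal to $(1,\ldots,1)$. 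For that matrix one computes directly $Q^\textnormal{T}Q=I+mJ=(m+\delta_{ij})_{i,j}$, where $J$ denotes the all-ones $l\times l$ matrix, which is exactly the asserted form in (ii).

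To carry out the equality analysis I would track the chain of inequalities in the induction of Lemma~\ref{schwer}. Removing the first row $r$ and decomposing $Q_1$ into indecomposable blocks $P_1,\dots,P_s$ with $r=(r_1,\dots,r_s)$, equality in $\det C=lm+1$ forces simultaneously: the inductive determinant bounds $\det(P_i^\textnormal{T}P_i)=l_i(k_i-l_i)+1$, the minimum bounds $\det(P_i^\textnormal{T}P_i)r_i(P_i^\textnormal{T}P_i)^{-1}r_i^\textnormal{T}=l_i$, and equality in the two applications of $1+\sum\alpha_i\le\prod(\alpha_i+1)$. The last condition forces at most one block $P_i$ to satisfy $k_i>l_i$; every other block is square of determinant $\pm1$, hence (being indecomposable) a single entry $\pm1$, with the corresponding $r_i=\pm1$. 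An induction on $k$ then reassembles $Q$ into the canonical form above: the unique possibly-larger block is extremal with fewer rows, so it is canonical by induction, and the first row is pinned down by the minimum condition $\det(C_1)rC_1^{-1}r^\textnormal{T}=l$, whose integral minimisers one checks are exactly $(1,\dots,1)$ up to the symmetries of $C_1$. Classifying these minimisers and showing the pieces glue to the global canonical form is the main obstacle; it is essentially a rigidity statement for the quadratic form $I+mJ$.

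Granting (ii), part (i) is immediate: the Smith normal form of $I+mJ$ is $\operatorname{diag}(1,\dots,1,ml+1)$ (subtracting the first row from the others exhibits an $(l-1)\times(l-1)$ unimodular minor, while the full determinant is $ml+1$), so the elementary divisors of $C$ are $1,\dots,1,ml+1$. Since the elementary divisors of the Cartan matrix of a block are powers of $p$ with largest one equal to $p^d$, the largest here is $ml+1$, whence $\det C=ml+1=p^d$ and $m=(p^d-1)/l$. In particular $ml=p^d-1\equiv-1\pmod p$, so $p\nmid l$ and $p\nmid m$.

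For (iii) I would use the contribution matrix $M=(m_{ij})=QC^{-1}Q^\textnormal{T}$, which is independent of the basic set, so I may compute it from the canonical $Q$ of (ii). By Sherman--Morrison, $C^{-1}=I-\frac{m}{p^d}J$, and a short calculation gives the diagonal contributions $m_{ii}=(p^d-m)/p^d$ for the $l$ rows of the identity block and $m_{ii}=l/p^d$ for the $m$ all-ones rows. Since $p\nmid m$ and $p\nmid l$, in every case $p^dm_{ii}$ is a $p$-adic unit, i.e. $\nu_p(m_{ii})=-d$. By the standard criterion relating contributions to heights (a character has height $0$ precisely when $p^dm_{\chi\chi}$ is prime to $p$; see the theory surrounding \cite[Theorem~V.9.5]{Feit}), every irreducible character of $B$ has height $0$, proving (iii). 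The only point needing care is that, although $M$ is basic-set independent, its diagonal is indexed by the actual characters; this is harmless because passing from $Q$ to the canonical matrix only permutes and sign-changes rows, leaving each $m_{ii}$, and hence the height of each $\chi$, unchanged.
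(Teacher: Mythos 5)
Your skeleton matches the paper's own proof: you read the hypothesis as the equality case of Lemma~\ref{schwer}, force equality in every step of that induction, collapse all but one block $P_i$ to $(1)$ with $r_i=\pm1$, and derive (i) and (iii) from the canonical form exactly as the paper does (elementary divisors $1,\ldots,1,p^d$ of $C$; diagonal contributions $p^dm_{ii}\in\{p^d-m,\,l\}$, both prime to $p$). Those portions are correct.

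The genuine gap is the step you yourself flag as ``the main obstacle'': classifying the integral vectors attaining $\min\{\det(C_1)xC_1^{-1}x^{\textnormal{T}}:0\ne x\in\mathbb{Z}^l\}=l$ and gluing the pieces. This is the mathematical core of part (ii), and deferring it with ``one checks'' leaves the proof incomplete; moreover, the answer you anticipate --- ``exactly $(1,\ldots,1)$ up to the symmetries of $C_1$'' --- is not what a direct computation gives. Once $Q_1$ is in the canonical shape \eqref{ind2} (available by induction), one has $\det(C_1)C_1^{-1}=\det(C_1)1_l-(k-l-1)M$ with $M$ the all-ones matrix, so for $r=(x_1,\ldots,x_l)$ the minimum condition reads
\[l=\sum_{i=1}^{l}x_i^2+(k-l-1)\sum_{i<j}(x_i-x_j)^2,\]
and besides $r=\pm(1,\ldots,1)$ there is a second family of solutions: when $k=l+2$, every $\pm e_j$ attains the minimum, since the diagonal entry of the dual form is $(l-1)(k-l-1)+1$, which equals $l$ precisely when $k-l-1=1$. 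To absorb this family into ``symmetries of $C_1$'' you would have to prove that $\Aut(C_1)$ (here $C_1=1_l+M$, essentially the $A_l$ root-lattice situation) acts transitively on these minimal vectors --- true, but a nontrivial lattice-theoretic fact, and precisely the rigidity statement you postponed. The paper avoids all of this with an elementary counting argument: setting $\alpha:=|\{i:x_i\ne0\}|\ge1$, the displayed identity gives $l\ge\alpha+(k-l-1)\alpha(l-\alpha)\ge\alpha+(k-l-1)(l-\alpha)\ge l$, which forces $r=\pm(1,\ldots,1)$ or ($k=l+2$ and $r=\pm e_j$), and in both cases $Q$ is seen directly to have the shape \eqref{ind2}. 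To complete your proposal you must either reproduce this computation or genuinely prove the transitivity claim; as written, part (ii) --- and hence (i) and (iii), which you derive from it --- is not established.
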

\begin{proof}
Let $l:=l(B)$, $k:=k(B)$, and let $Q=(q_{ij})$ be the decomposition matrix of $B$.
In case $l=k$ we have $k=l=1$, $p^d=1$ and the result is trivial. Thus, let $l<k$. Then in the induction step in the proof of Lemma~\ref{schwer}, we have that 
\begin{align}\label{2eqs}
1+\sum_{i=1}^s{l_i(k_i-l_i)}=\prod_{i=1}^s{\bigl(l_i(k_i-l_i)+1\bigr)}&&\text{and}&&1+\sum_{j\ne i}{(k_j-l_j)}=\prod_{j\ne i}{\bigl(l_j(k_j-l_j)+1\bigr)}
\end{align}
for each $i$.
The first equation shows that $k_i=l_i$ for all but possibly one $i$, say $i=s$. Moreover, $\det (P_i^\textnormal{T}P_i)=1$ for $i\ne s$. This implies $k_i=l_i=1$ and $P_i=(1)$ for $i\ne s$, since otherwise $P_i$ would be decomposable. Similarly, the second equation of \eqref{2eqs} gives $s=1$ or $l_s=1$.
In case $s>1$ we easily obtain
\begin{equation}\label{ind1}
Q=\begin{pmatrix}
1&\cdots&1\\
1&&0\\
&\ddots&\\
0&&1\\
&&\vdots\\
&&1
\end{pmatrix}.
\end{equation}
After replacing $Q$ by $QS$ for some $S\in\GL(l,\mathbb{Z})$ and permuting rows, we get
\begin{equation}
Q=\begin{pmatrix}\label{ind2}
1&&0\\
&\ddots&\\
0&&1\\
1&\cdots&1\\
\vdots&\ddots&\vdots\\
1&\cdots&1
\end{pmatrix},
\end{equation}
and $C=(m'+\delta_{ij})_{i,j}$ with $m':=(\det C-1)/l$. Now assume that $s=1$, i.\,e. $Q_1$ is indecomposable with the notation of the proof of Lemma~\ref{schwer}. Then $\det C_1=l(k-l-1)+1$. In case $\det C_1=1$, we must have $l=1$, $k=2$, and the claim is obvious. Therefore, we may assume that $k-l-1\ge 1$.
Moreover,
\[\det(C_1)rC_1^{-1}r^\textnormal{T}=\min\{\det(C_1)xC_1^{-1}x^\textnormal{T}:0\ne x\in\mathbb{Z}^l\}=t=l.\] 
By the last part of the proof of Lemma~\ref{schwer}, we deduce that $Q_1$ has the same shape as $Q$ in \eqref{ind1}. Hence, we may also assume that $Q_1$ is given as in \eqref{ind2}. Then one can show that $\det(C_1)C_1^{-1}=\det(C_1)1_l-(k-l-1)M$ where $1_l$ is the $l\times l$ identity matrix and all entries of $M$ are $1$. Write $r=(x_1,\ldots,x_l)$. Then
\[l=\det(C_1)rC_1^{-1}r^\textnormal{T}=\sum_{i=1}^l{x_i^2}+(k-l-1)\sum_{i<j}{(x_i-x_j)^2}.\]
Let $\alpha:=|\{i:x_i\ne 0\}|\ge 1$. Then 
\[l=\sum_{i=1}^l{x_i^2}+(k-l-1)\sum_{i<j}{(x_i-x_j)^2}\ge \alpha+(k-l-1)\alpha(l-\alpha)\ge \alpha+(k-l-1)(l-\alpha)\ge l.\]
We conclude that $r=\pm(1,\ldots,1)$ or $k=l+2$ and $x_i=\delta_{ij}$ for some fixed $j\in\{1,\ldots,l\}$. In both cases it is easy to see that $Q$ has the same shape as in \eqref{ind2}. 
Thus altogether, we have shown that $C=(m'+\delta_{ij})_{i,j}$ up to basic sets.
It follows that the first $l-1$ elementary divisors of $C$ all equal $1$. Since $p^d$ is also an elementary divisor, we obtain $\det C=p^d$ and $m'=m$.

For the last claim, note that the heights of the irreducible characters of $B$ can be read off the contribution matrix $M:=(m_{ij})=QC^{-1}Q^\textnormal{T}$ which does not depend on the chosen basic set. In the configuration described above, an easy calculation shows $\{l,p^d-m\}\ni p^dm_{ii}\not\equiv 0\pmod{p}$ for $i=1,\ldots,k$. Hence, by \cite[Theorem~V.9.4(iv)]{Feit} all heights equal $0$.
\end{proof}

By Brauer's Height Zero Conjecture, the defect groups in Proposition~\ref{detdefect} should be abelian. In fact, for any prime power $p^d>1$ and any divisor $t$ of $p^d-1$ we can construct examples as follows. Let $T\le\mathbb{F}_{p^d}^\times$ be a subgroup of order $t$ where $\mathbb{F}_{p^d}$ is the field with $p^d$ elements. Then, the principal block $B$ of $\mathbb{F}_{p^d}\rtimes T$ satisfies the hypothesis of Proposition~\ref{detdefect} with $l(B)=t$.
Also, any block with cyclic defect group satisfies the hypothesis (see \cite[Section~VII.2]{Feit}).

We may ask further, when we have equality $k(B)=\det C\ (=p^d)$ in the situation of Proposition~\ref{detdefect}. It is easy to see that in this case $l(B)\in\{1,p^d-1\}$. In both cases the defect groups must be abelian (see \cite[Proposition~1 and Theorem~3]{OkuyamaTsushima} and \cite[Theorem 7.1]{HKKS}). 

Next, we elaborate on Lemma~\ref{schwer}.

\begin{Lemma}\label{except}
Let $Q\in\mathbb{Z}^{k\times l}$ be a matrix of rank $l$ without vanishing rows. 
Suppose that for every $S\in\GL(l,\mathbb{Z})$, every column of $QS$ has at least two non-zero entries.
Then
\[\det (Q^\textnormal{T}Q)\ge l(k-l)\]
except in case
\[Q=\begin{pmatrix}
1&.&.\\1&.&.\\.&1&.\\.&1&.\\.&.&1\\.&.&1
\end{pmatrix}S\]
where $S\in\GL(3,\mathbb{Z})$ and $\det (Q^\textnormal{T}Q)=l(k-l)-1$.
\end{Lemma}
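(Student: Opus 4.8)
The plan is to reduce everything to the indecomposable case, where Lemma~\ref{schwer} already gives the sharp bound $l_j(k_j-l_j)+1$, and then to control $\det C$ by a purely combinatorial inequality. First I would use a matrix $S_0\in\GL(l,\mathbb{Z})$ to bring $Q$ into a maximal block-diagonal form $QS_0=\operatorname{diag}(N_1,\dots,N_r)$ with each $N_j\in\mathbb{Z}^{k_j\times l_j}$ indecomposable of rank $l_j$ and without vanishing rows, where $\sum k_j=k$ and $\sum l_j=l$. Since $\det C=\prod_j\det(N_j^\textnormal{T}N_j)$ and $k-l=\sum_j(k_j-l_j)$, the problem decouples over blocks. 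The hypothesis transfers: if some $N_j$ could be brought by a $T\in\GL(l_j,\mathbb{Z})$ to have a column with a single non-zero entry, then $\operatorname{diag}(I,\dots,T,\dots,I)$ would do the same for $QS_0$, contradicting the hypothesis on $Q$; hence every $N_j$ again satisfies the column hypothesis.

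The first genuine step is to show that each block is non-square, i.e. $p_j:=k_j-l_j\ge1$. The key observation is that a full-rank square integer matrix always fails the column hypothesis: for a square $N$ of rank $n$ the column lattice $N\mathbb{Z}^n$ contains a smallest positive multiple $c\,e_1$ of $e_1$ (e.g. $c\mid\det N$), and by minimality of $c$ the vector $s:=N^{-1}(c\,e_1)\in\mathbb{Z}^n$ is primitive. Thus $s$ is a column of some $T\in\GL(n,\mathbb{Z})$, and the corresponding column of $NT$ equals $Ns=c\,e_1$, which has a single non-zero entry. Consequently no block can be square, and Lemma~\ref{schwer} applies to each $N_j$ to give $\det(N_j^\textnormal{T}N_j)\ge l_jp_j+1\ge2$.

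Collecting these bounds yields $\det C\ge\prod_j(l_jp_j+1)$, so it remains to compare this quantity with $l(k-l)=\bigl(\sum_j l_j\bigr)\bigl(\sum_j p_j\bigr)$. I would prove $\prod_j(l_jp_j+1)\ge\bigl(\sum_j l_j\bigr)\bigl(\sum_j p_j\bigr)$ for integers $l_j,p_j\ge1$ by a monotonicity argument: writing $f-g$ for the difference of the two sides, increasing a single $l_j$ (or $p_j$) by $1$ changes it by $p_j\prod_{i\ne j}(l_ip_i+1)-\sum_i p_i$, which is $\ge0$ by $l_ip_i\ge p_i$ together with the trivial bound $1+\sum\alpha_i\le\prod(1+\alpha_i)$. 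Hence $f-g$ is minimised at the all-ones configuration, where it equals $2^r-r^2$; this is $\ge0$ for $r\ne3$, equals $-1$ for $r=3$, and for $r=3$ any deviation from all-ones already raises $f-g$ to $\ge0$. Therefore $\det C\ge l(k-l)$ unless $r=3$ and all blocks satisfy $l_j=p_j=1$.

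Finally I would settle the remaining case $r=3$, $l_j=p_j=1$: each $N_j$ is a column $\binom{x_j}{y_j}$ with $x_j,y_j\ne0$, so $\det(N_j^\textnormal{T}N_j)=x_j^2+y_j^2\in\{2\}\cup\{5,6,\dots\}$. Thus $\det C=\prod_j(x_j^2+y_j^2)$ is either $\ge2\cdot2\cdot5>9=l(k-l)$, giving no exception, or equals $8=l(k-l)-1$, which forces every factor to be $2$, i.e. $N_j=\binom{\pm1}{\pm1}$. Up to permutations and sign changes of rows and a transformation in $\GL(3,\mathbb{Z})$ this is precisely the displayed $6\times3$ matrix. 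The main obstacle is the combinatorial optimisation—proving that the all-ones configuration is the unique minimiser and isolating the single deficient value—supported by the non-square-block lemma that makes Lemma~\ref{schwer} applicable blockwise.
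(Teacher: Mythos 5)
Your proof is correct, and its skeleton agrees with the paper's: split $Q$ into indecomposable diagonal blocks $N_j\in\mathbb{Z}^{k_j\times l_j}$, apply Lemma~\ref{schwer} blockwise, and reduce everything to the combinatorial inequality $\prod_j\bigl(l_jp_j+1\bigr)\ge\bigl(\sum_j l_j\bigr)\bigl(\sum_j p_j\bigr)$ with $p_j=k_j-l_j$. But you execute the two pivotal steps by genuinely different means. First, where the paper simply asserts ``by the hypothesis, $l_i<k_i$'', you prove it: the primitive-vector argument that a full-rank square integer block always admits a unimodular column transformation producing a column $c\,e_1$ is exactly the missing justification, and it is needed for Lemma~\ref{schwer} to apply blockwise. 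Second, and more substantially, the paper proves the combinatorial inequality by induction on the number of blocks $s$, ordering so that $l_s$ is maximal and reducing to the strict inequality $(l_s\beta-1)((k_s-l_s)\alpha-1)>l_s(k_s-l_s)$; it must then patch the induction by a separate check at $s=4$ (three blocks of type $(1,2)$ plus one more), because the exceptional triple would otherwise invalidate the induction hypothesis, and its write-up claims $\alpha,\beta\ge 2$, which need not hold when $s=2$. Your discrete monotonicity argument --- a unit increment of any $l_j$ changes the difference by $p_j\prod_{i\ne j}(l_ip_i+1)-\sum_i p_i\ge 0$, so the difference is minimized at the all-ones configuration, where it equals $2^r-r^2$ --- treats all $r$ uniformly, isolates $r=3$ as the unique deficient case in one stroke, and requires no interference analysis; this is cleaner and more robust than the paper's induction. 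You also spell out the classification of the exceptional configuration (each factor $x_j^2+y_j^2$ lies in $\{2\}\cup\{5,6,\dots\}$, so $\det C$ is either $8$ or at least $20$), which the paper leaves implicit. Your closing caveat is accurate and worth keeping: the exceptional $Q$ is determined only up to row permutations and row signs in addition to right multiplication by an element of $\GL(3,\mathbb{Z})$; this imprecision is already present in the statement of the lemma and is harmless, since $\det(Q^\textnormal{T}Q)$ is invariant under those operations.
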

\begin{proof}
We may decompose $Q$ in the form
\[Q=\begin{pmatrix}
Q_1&&0\\&\ddots&\\0&&Q_s
\end{pmatrix}\]
where $Q_i\in\mathbb{Z}^{k_i\times l_i}$ is indecomposable, $\sum k_i=k$ and $\sum l_i=l$. 
By the hypothesis, $l_i<k_i$ for $i=1,\ldots,s$.
By Lemma~\ref{schwer}, $\det (Q_i^\textnormal{T}Q_i)\ge l_i(k_i-l_i)+1$. Hence,
\[\det (Q^\textnormal{T}Q)=\prod_{i=1}^s{\det (Q_i^\textnormal{T}Q_i)}\ge \prod_{i=1}^s{\bigl(l_i(k_i-l_i)+1\bigr)},\]
and it suffices to show
\begin{equation}\label{indclaim}
\prod_{i=1}^s{\bigl(l_i(k_i-l_i)+1\bigr)}\ge l(k-l)=\Bigl(\sum_{i=1}^s{l_i}\Bigr)\Bigl(\sum_{i=1}^s{(k_i-l_i)}\Bigr)
\end{equation}
except in case $s=3$, $l_1=l_2=l_3=1$ and $k_1=k_2=k_3=2$. We use induction on $s$. In case $s=1$ the claim is obvious. 
Now let $s\ge 2$. We may assume that $l_s\ge l_i$ for $i=1,\ldots,s$. 
By induction we have
\[\prod_{i=1}^s{\bigl(l_i(k_i-l_i)+1\bigr)}\ge\bigl(l_s(k_s-l_s)+1\bigr)\Bigl(\sum_{i=1}^{s-1}{l_i}\Bigr)\Bigl(\sum_{i=1}^{s-1}{(k_i-l_i)}\Bigr),\]
and we need to show that
\[l_s(k_s-l_s)\Bigl(\sum_{i=1}^{s-1}{l_i}\Bigr)\Bigl(\sum_{i=1}^{s-1}{(k_i-l_i)}\Bigr)\ge l_s(k_s-l_s)+ (k_s-l_s)\sum_{i=1}^{s-1}{l_i}+l_s\sum_{i=1}^{s-1}{(k_i-l_i)}.\]
Setting $\alpha:=\sum_{i=1}^{s-1}{l_i}\ge 2$ and $\beta:=\sum_{i=1}^{s-1}{(k_i-l_i)}\ge 2$, this becomes
\[(l_s\beta-1)((k_s-l_s)\alpha-1)>l_s(k_s-l_s).\]
This is true unless $l_s=k_s-l_s=1$ and $\alpha=\beta=2$. In this case we must have $s=3$, $l_1=l_2=1$ and $k_1=k_2=2$, since $l_3\ge l_i$. However, this configuration was excluded. In order to complete the proof, we have to show that this exceptional case does not interfere the induction process. For this, it suffices to consider the case $s=4$, $l_1=l_2=l_3=1$ and $k_1=k_2=k_3=2$. Then \eqref{indclaim} becomes
\[8\bigl(l_4(k_4-l_4)+1\bigr)\ge (3+l_4)(3+k_4-l_4)\]
which is equivalent to
\[l_4\bigl(7(k_4-l_4)-3\bigr)\ge3(k_4-l_4)+1.\]
This is true since $1\le l_4<k_4$.
\end{proof}

A variation of Theorem~\ref{main} generalizes a result by Brandt \cite[p. 515]{Brandt} (note that Brandt's result only applies to the exact Cartan matrix):

\begin{Proposition}
Let $B$ be a block of a finite group with Cartan matrix $C=(c_{ij})$ up to basic sets. Let $S=S_1\mathbin{\dot\cup}\ldots\mathbin{\dot\cup} S_r$ be a partition of the set $\{1,\ldots,l(B)\}$. Let $C_{S_i}:=(c_{st})_{s,t\in S_i}$ and 
\[d(S_i):=\min\Bigl\{\det(C_{S_i}),\frac{\det(C_{S_i})+1}{|S_i|}+|S_i|\Bigr\}.\] 
Then
\[k(B)\le 1-r+\sum_{i=1}^r{d(S_i)}.\]
In particular, $k(B)\le\tr C-l(B)+1$.
\end{Proposition}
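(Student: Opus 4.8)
The plan is to translate the statement into a purely combinatorial assertion about the decomposition matrix $Q\in\mathbb{Z}^{k(B)\times l(B)}$ and then to combine a connectivity argument with the estimates from Lemma~\ref{schwer}. Fixing a basic set so that $C=Q^\textnormal{T}Q=(c_{ij})$, I regard the columns of $Q$ as grouped according to $S_1,\ldots,S_r$; for each $i$ let $\widehat{Q}_i$ consist of the columns in $S_i$ together with its nonzero rows, and let $k_i$ be the number of those rows, so $C_{S_i}=\widehat{Q}_i^\textnormal{T}\widehat{Q}_i$. Since $C$ is positive definite, each $C_{S_i}$ is positive definite, whence $\det C_{S_i}\ge 1$ and $\widehat{Q}_i$ has full column rank $|S_i|$. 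By Proposition~\ref{con}, $Q$ is indecomposable. The finest partition (all $|S_i|=1$) already gives the ``in particular'' statement, since there $d(S_i)=c_{ii}$ and $\tr C=\sum_\chi\|\rho_\chi\|^2\ge k(B)+\sum_\chi\bigl(\#\{\text{nonzero entries of }\rho_\chi\}-1\bigr)\ge k(B)+(l(B)-1)$, the final step being indecomposability of $Q$ read as connectivity of its columns. I would treat this special case as the template for the general one.

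The first tool is a connectivity count. Applying a transformation from $\GL(|S_i|,\mathbb{Z})$ inside each block (which changes neither $\det C_{S_i}$, nor $|S_i|$, nor $d(S_i)$, and preserves indecomposability of $Q$) I may assume each $\widehat{Q}_i$ is block diagonal with indecomposable blocks $P_{i1},\ldots,P_{im_i}$ of sizes $a_{ij}\times b_{ij}$; set $M=\sum_i m_i$. Since $Q$ is indecomposable, the hypergraph with these $M$ sub-blocks as vertices and the supports of the rows of $Q$ as hyperedges is connected, so $\sum_\chi\bigl(\#\{\text{sub-blocks meeting }\rho_\chi\}-1\bigr)\ge M-1$, which rearranges to $k(B)\le\sum_{i,j}a_{ij}-(M-1)$.

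The second tool is Lemma~\ref{schwer} applied to each indecomposable sub-block, giving $a_{ij}\le(\det(P_{ij}^\textnormal{T}P_{ij})-1)/b_{ij}+b_{ij}\le\det(P_{ij}^\textnormal{T}P_{ij})$. Substituting this into the connectivity count and applying the elementary inequality $1+\sum_j\alpha_j\le\prod_j(1+\alpha_j)$ with $\alpha_j=\det(P_{ij}^\textnormal{T}P_{ij})-1$, exactly as in the proof of Lemma~\ref{schwer}, the blockwise estimate $\sum_j\det(P_{ij}^\textnormal{T}P_{ij})-m_i+1\le\prod_j\det(P_{ij}^\textnormal{T}P_{ij})=\det C_{S_i}$ collapses the whole bound to $k(B)\le 1-r+\sum_i\det C_{S_i}$. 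This is the inequality with $d(S_i)$ replaced by $\det C_{S_i}$, and it already settles the Proposition whenever $\det C_{S_i}$ is the smaller of the two quantities defining $d(S_i)$ for every $i$.

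The hard part will be the averaging term $(\det C_{S_i}+1)/|S_i|+|S_i|$, which is binding precisely for blocks $S_i$ that are internally disconnected yet carry a large determinant; for such a block the naive count $\sum_j a_{ij}-(m_i-1)$ overshoots $d(S_i)$. The surplus must be absorbed by overlaps with other blocks: rows that agree on $S_i$ either coincide as rows of $Q$, which inflates $\det C_{S_i}$, or differ outside $S_i$ and hence are nonzero in some other block. The plan is to make this precise by bounding, for each block separately, the number of its rows that can avoid all external overlap in terms of $d(S_i)$, using the second assertion of Lemma~\ref{schwer} (that $\det(C_{S_i})\,x C_{S_i}^{-1}x^\textnormal{T}\ge|S_i|$ for $0\ne x$) together with Sylvester's determinant formula to measure how each additional row increases $\det C_{S_i}$. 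The main obstacle is to sum these per-block forced-overlap estimates without charging a single overlap to two different blocks; I expect to handle this by the same two applications of the elementary product inequality used in Lemma~\ref{schwer}, now performed blockwise, which should convert the combined estimate into the claimed bound $k(B)\le 1-r+\sum_i d(S_i)$.
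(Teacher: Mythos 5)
Your overall architecture (a per-block bound on the number of nonzero rows of $Q_{S_i}$, chained together via indecomposability of $Q$) matches the paper's, and your hypergraph count $k(B)\le\sum_{i,j}a_{ij}-M+1$ is correct; but the proof has two genuine gaps, and both trace to the same missing ingredient: you use no block theory beyond Proposition~\ref{con}, whereas the paper's proof leans essentially on the orthogonality relations. Concretely, even your ``easy half'' (the bound with $\det C_{S_i}$ in place of $d(S_i)$) is not established. You need $a_{ij}\le\det(P_{ij}^\textnormal{T}P_{ij})$ for every indecomposable sub-block, but Lemma~\ref{schwer} only gives $\det(P_{ij}^\textnormal{T}P_{ij})\ge b_{ij}(a_{ij}-b_{ij})+1$, which yields $a_{ij}\le\det(P_{ij}^\textnormal{T}P_{ij})$ when $a_{ij}>b_{ij}$ but gives nothing beyond $\det\ge 1$ for square blocks. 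For square indecomposable blocks your inequality is false: the $5\times 5$ matrix $P$ with columns $e_1-e_2$, $e_2-e_3$, $e_3-e_4$, $e_4-e_5$, $e_4+e_5$ is indecomposable (its column lattice $\{x\in\mathbb{Z}^5:x_1+\cdots+x_5\equiv 0\pmod 2\}$ admits no basis whose supports split the five coordinates into two nonempty parts), yet $\det(P^\textnormal{T}P)=4<5$. Nothing in your argument excludes square sub-blocks $P_{ij}$. What excludes them for a genuine decomposition matrix is the paper's block-theoretic step: if deleting one nonzero row of $Q_{S_i}$ drops the rank below $|S_i|$ (which certainly happens when $\widehat{Q}_i$ has a square diagonal block), then after a change of basic set some column of $Q$ has a single nonzero entry, so by the orthogonality relations $B$ contains an irreducible character vanishing on the $p$-singular elements, contradicting $k(B)>1$. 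This step cannot be replaced by pure matrix theory, as the example shows.

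The second gap is that the averaged term $(\det C_{S_i}+1)/|S_i|+|S_i|$ --- the only reason the statement improves on the determinant bound --- is never proved; your final paragraph is a plan, not an argument, and it aims in the wrong direction. No cross-block ``overlap bookkeeping'' is needed, nor any decomposition of $Q_{S_i}$ into indecomposable pieces: the paper proves the per-block inequality $\alpha_i\le(\det C_{S_i}+1)/|S_i|+|S_i|$, where $\alpha_i$ is the total number of nonzero rows of $Q_{S_i}$, by quoting Lemma~\ref{except}, whose hypothesis (every column of $Q_{S_i}S$, $S\in\GL(|S_i|,\mathbb{Z})$, has at least two nonzero entries) is supplied by the same orthogonality-relations argument, and whose conclusion reads $\det C_{S_i}\ge|S_i|(\alpha_i-|S_i|)-1$. (The companion bound $\alpha_i\le\det C_{S_i}$ comes from Lemma~\ref{lem} and Cauchy--Binet applied to the row-deleted matrices, again legitimized by the orthogonality argument.) With $\alpha_i\le d(S_i)$ in hand, the chaining you set up finishes the proof; without Lemma~\ref{except}, your sketch would in effect have to reprove that lemma, and the Sylvester-formula route you propose gives no visible way to do so.
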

\begin{proof}
We may assume that $k(B)>1$.
Let $Q=(q_{ij})$ be the decomposition matrix of $B$, and let $Q_{S_i}:=(q_{st})_{t\in S_i}$ for $i=1,\ldots,r$. 
For $i=1,\ldots,\alpha$, let $P_i$ be the matrix obtained from $Q_{S_1}$ by removing the $i$-th row. In case $\det P_i^\textnormal{T}P_i=0$ we can achieve as usual that $Q$ has one column with only one non-zero row. By the orthogonality relations, $B$ contains an irreducible character which vanishes on the $p$-singular elements. However, this contradicts $k(B)>1$. Hence, by Lemma~\ref{lem}, $\det (P_i^\textnormal{T}P_i)\ge \alpha-|S_i|$. Now an application of the Cauchy-Binet formula as in the proof of Lemma~\ref{schwer} shows that $\alpha\le\det C_{S_1}$. Moreover, by Lemma~\ref{except}, $\alpha\le(\det C_{S_1}+1)/|S_1|+|S_1|$. Thus altogether, $\alpha\le d(S_1)$.
By Proposition~\ref{con}, we may assume that $Q_{S_1}^\textnormal{T}Q_{S_2}\ne 0$ after permuting the columns of $Q$ if necessary. Hence, in the worst case, the non-zero rows of $Q_{S_2}$ can only contribute $d(S_2)-1$ new non-zero rows of $Q$. Continuing this process leads to the first claim. The last claim follows by taking $S_i=\{i\}$ for $i=1,\ldots,l(B)$.
\end{proof}

Similar inequalities were given in \cite{KuelshammerWada}.

The next result concerns the open question raised in \cite{SambalekB}. The number of irreducible characters of height $0$ of $B$ is denoted by $k_0(B)$.

\begin{Proposition}
Let $B$ be a $p$-block of a finite group with defect $d$ and Cartan matrix $C$. Suppose that there exists a basic set such that
\[C=\begin{pmatrix}
C_1&0\\0&C_2
\end{pmatrix}\]
where $p^d$ occurs as elementary divisor of $C_1\in\mathbb{Z}^{l_1\times l_1}$. Then
\[k_0(B)\le\frac{\det C_1-1}{l_1}+l_1.\]
\end{Proposition}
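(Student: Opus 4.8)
The plan is to read heights off the contribution matrix and to show that, modulo $p$, only the block $C_1$ contributes. First I would partition the decomposition matrix $Q$ of $B$ according to the given basic set as $Q=(Q_1\mid Q_2)$, so that $Q_1^\textnormal{T}Q_1=C_1$, $Q_2^\textnormal{T}Q_2=C_2$ and $Q_1^\textnormal{T}Q_2=0$. Then $C^{-1}$ is block diagonal and the contribution matrix $M=(m_{ij})=QC^{-1}Q^\textnormal{T}$ splits as $M=Q_1C_1^{-1}Q_1^\textnormal{T}+Q_2C_2^{-1}Q_2^\textnormal{T}$. Recall that the height of $\chi\in\Irr(B)$ is detected through $p^dm_{\chi\chi}\not\equiv 0\pmod p$ by \cite[Theorem~V.9.4(iv)]{Feit}.

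The key step is to discard the $C_2$-part modulo $p$. Since $p^d$ is the \emph{unique} largest elementary divisor of $C$ and already occurs in $C_1$, every elementary divisor of $C_2$ is a $p$-power strictly smaller than $p^d$, hence divides $p^{d-1}$. Therefore $p^{d-1}C_2^{-1}$ is integral, so $p^dC_2^{-1}\equiv 0\pmod p$ and thus $p^d(Q_2C_2^{-1}Q_2^\textnormal{T})_{\chi\chi}\equiv 0\pmod p$ for every $\chi$. Consequently $p^dm_{\chi\chi}\equiv p^d(Q_1C_1^{-1}Q_1^\textnormal{T})_{\chi\chi}\pmod p$. Because $C_1^{-1}$ is positive definite, $(Q_1C_1^{-1}Q_1^\textnormal{T})_{\chi\chi}=0$ exactly when the $\chi$-row of $Q_1$ vanishes; hence every height-zero character contributes a non-zero row of $Q_1$.

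Next I would pass to $\widetilde{Q}$, the matrix obtained from $Q_1$ by deleting its zero rows. Then $\widetilde{Q}^\textnormal{T}\widetilde{Q}=C_1$, the matrix $\widetilde{Q}$ has rank $l_1$ and no vanishing rows, and $k_0(B)$ is bounded by the number of height-zero rows of $\widetilde{Q}$. If $\widetilde{Q}$ is indecomposable, Lemma~\ref{schwer} applies verbatim and gives $\det C_1\ge l_1(k''-l_1)+1$, where $k''$ is the number of rows of $\widetilde{Q}$; since $k_0(B)\le k''$, rearranging yields the claim exactly as in Theorem~\ref{main}. If $\widetilde{Q}$ is decomposable, I would decompose it (using a basic set of the first $l_1$ columns only, which preserves the block shape of $C$) into indecomposable blocks $A_1,\ldots,A_s$. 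Repeating the multiplicity-one argument blockwise shows that $p^d$ is an elementary divisor of exactly one Gram matrix $A_1^\textnormal{T}A_1$ and that all height-zero rows lie in $A_1$; Lemma~\ref{schwer} applied to $A_1$ then bounds $k_0(B)$ by $(\det(A_1^\textnormal{T}A_1)-1)/b_1+b_1$, where $b_1$ is the number of columns of $A_1$.

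The main obstacle is the final comparison in the decomposable case: one must verify that the bound coming from the single block $A_1$ does not exceed the stated bound $(\det C_1-1)/l_1+l_1$. Writing $\det C_1=\det(A_1^\textnormal{T}A_1)\prod_{j\ge 2}\det(A_j^\textnormal{T}A_j)$ and $l_1=b_1+\sum_{j\ge 2}b_j$, this reduces to the elementary monotonicity estimate that replacing $(D,b)$ by $(DR',b+b')$ increases $(D-1)/b+b$ whenever $R'\ge b'+1$ — a consequence of the inequality $1+\sum\alpha_i\le\prod(1+\alpha_i)$ already exploited in Lemma~\ref{schwer}. For a complementary block $A_j$ with strictly more rows than columns, Lemma~\ref{schwer} guarantees $\det(A_j^\textnormal{T}A_j)\ge b_j(a_j-b_j)+1\ge b_j+1$, so the estimate applies. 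The delicate point is a \emph{square} complementary block, where $\det(A_j^\textnormal{T}A_j)$ is a perfect-square power of $p$ that can be smaller than $b_j+1$; here the estimate can fail for an isolated block, and the real work lies in excluding such configurations. Unimodular square blocks are ruled out directly by Proposition~\ref{con}, since an isolated unimodular block, being orthogonal to $Q_2$, would produce a single-entry row of $Q$ and hence contradict indecomposability of the decomposition matrix; disposing of the remaining non-unimodular square blocks (or peeling blocks off in order of increasing column number so that the accumulated column count dominates) is the crux I expect to require the most care.
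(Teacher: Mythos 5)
Your reduction to the non-zero rows of $Q_1$ (via the mod-$p$ analysis of the contribution matrix) and your treatment of the indecomposable case coincide exactly with the paper's proof. The genuine gap is where you yourself flag it: the decomposable case. Your plan — apply Lemma~\ref{schwer} to the single block $A_1$ whose Gram matrix carries $p^d$ and then compare $(\det(A_1^\textnormal{T}A_1)-1)/b_1+b_1$ with $(\det C_1-1)/l_1+l_1$ — needs every complementary block $A_j$ to satisfy $\det(A_j^\textnormal{T}A_j)\ge b_j+1$, which Lemma~\ref{schwer} only guarantees when $A_j$ has strictly more rows than columns. A square complementary block with many columns and determinant as small as $p^2$ makes the monotonicity step false (take $b_1=1$, $\det(A_1^\textnormal{T}A_1)$ large, and one square block with $b_j\gg p^2$: then $(\det C_1-1)/l_1+l_1$ drops below $\det(A_1^\textnormal{T}A_1)$), and you only succeed in excluding the \emph{unimodular} square blocks. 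So, as written, the proof is incomplete.

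The paper's handling of the decomposable case avoids this comparison entirely. It applies Lemma~\ref{except} to the whole of $Q_1$ (zero rows deleted): the exceptional case of that lemma cannot occur, because there $Q_1^\textnormal{T}Q_1$ is $2\cdot 1_3$ up to basic sets, so $p^d=2$ would be a threefold elementary divisor of $C$, contradicting the uniqueness of the largest elementary divisor of a Cartan matrix. This bounds the number of non-zero rows of $Q_1$ by $(\det C_1)/l_1+l_1$. Repeating the mod-$p$ contribution argument for the refined block decomposition confines the height-zero characters to the non-zero rows of the one block whose Gram matrix has $p^d$ as elementary divisor; since each remaining block supplies at least one non-zero row of $Q_1$, this loses at least one row, whence $k_0(B)\le(\det C_1)/l_1+l_1-1\le(\det C_1-1)/l_1+l_1$. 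Incidentally, the ingredient that closes your gap (and that is also implicitly needed to verify the hypothesis of Lemma~\ref{except} here) is the orthogonality argument the paper uses in its Brandt-type proposition: a square block $A_j$ of \emph{any} determinant can be brought into column Hermite form, producing a column of $Q_1S$ with a single non-zero entry; the relation $Q_1^\textnormal{T}Q_2=0$ then forces the corresponding row of $Q_2$ to vanish, so the decomposition matrix has, up to basic sets, a column with a single non-zero entry. Such a column, viewed as a generalized character, vanishes on the $p$-singular elements, so $B$ contains a defect-zero character and $k(B)=1$, where the bound is trivial. With square blocks excluded in this way, your monotonicity argument would go through.
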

\begin{proof}
Let $Q_1$ be the part of the decomposition matrix of $B$ such that $Q_1^\textnormal{T}Q_1=C_1$.
Since all elementary divisors of $C_2$ are strictly smaller than $p^d$, we see that all entries of $p^dC_2^{-1}$ are divisible by $p$. Let $\chi\in\Irr(B)$ be a character whose corresponding row in $Q_1$ is zero. Then it follows easily that the contribution $p^dm_{\chi\chi}$ is divisible by $p$. In particular, $\chi$ has positive height. Therefore, $k_0(B)$ is at most the number of non-zero rows of $Q_1$. If $Q_1$ is indecomposable, the claim follows from Lemma~\ref{schwer}. Now assume that $Q_1$ is decomposable. Then by Lemma~\ref{except}, $Q_1$ has at most $(\det C_1)/l_1+l_1$ non-zero rows (observe that the exceptional case cannot occur). However, we may decompose $Q_1$ and replace $C_1$ by the corresponding smaller matrix. Then the new matrix $Q_1$ has at most $(\det C_1)/l_1+l_1-1\le(\det C_1-1)/l_1+l_1$ non-zero rows. This completes the proof.
\end{proof}

Our next result extends a theorem by Olsson \cite[Corollary~7]{Olsson}. The proof (following \cite[Theorem~1]{SambalekB}) makes use of the reduction theory of quadratic forms in the sense of Minkowski. However, we will not refer to the precise definition of a reduced form. Nevertheless, recall (see \cite[p. 396]{Conway}) that a reduced quadratic form corresponding to a symmetric matrix $(\alpha_{ij})\in\mathbb{Z}^{l\times l}$ satisfies 
\begin{align*}
\alpha_{11}\le\ldots&\le \alpha_{ll},&\\
2|\alpha_{ij}|&\le\alpha_{ii}&(i&<j),\\
2|\alpha_{ij}\pm\alpha_{ik}\pm\alpha_{jk}|&\le\alpha_{ii}+\alpha_{jj}&(i&<j<k).
\end{align*}

\begin{Proposition}
Let $B$ be a $p$-block with defect $d$ and $l(B)\le 3$. Then $k(B)\le p^d$.
\end{Proposition}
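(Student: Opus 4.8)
The plan is to follow the strategy of \cite[Theorem~1]{SambalekB}: normalise the Cartan matrix by Minkowski reduction and then dispose of the finitely many resulting shapes. Write $l:=l(B)$ and let $Q$ be the decomposition matrix, so $C=Q^\textnormal{T}Q$ and, by Proposition~\ref{con}, $Q$ is indecomposable. Since changing the basic set replaces $C$ by $S^\textnormal{T}CS$ with $S\in\GL(l,\mathbb{Z})$, we may assume that the quadratic form attached to $C$ is reduced in the sense recalled before the statement. Recall also that the elementary divisors of $C$ are powers of $p$ with largest one $p^d$ (and $p^d$ occurs with multiplicity one), so that $\det C=p^d\prod_i p^{d_i}$ with the remaining exponents satisfying $d_i\le d-1$.

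The decisive case split is on the size of $\det C$. If $\det C=p^d$, which in particular covers $l=1$, then Theorem~\ref{main} gives at once
\[k(B)\le\frac{\det C-1}{l}+l\le\det C=p^d\]
by the second inequality of that theorem. Hence from now on we may assume $\det C>p^d$ and $l\in\{2,3\}$; equivalently, at least one elementary divisor strictly between $1$ and $p^d$ occurs.

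In the residual cases I would argue directly on the reduced Gram matrix $C=(c_{ij})$ and the rows $q_1,\dots,q_{k(B)}$ of $Q$, which satisfy $\sum_i q_{is}q_{it}=c_{st}$ for all $s,t$. Two pieces of block data sharpen the estimate beyond Theorem~\ref{main}: the contribution matrix $M=QC^{-1}Q^\textnormal{T}$, for which $\tr M=l(B)$ (so $\sum_i q_iC^{-1}q_i^\textnormal{T}=l$ with each summand lying in $p^{-d}\mathbb{Z}_{>0}$, as $p^dC^{-1}$ is integral); and the height of $\chi_i$, which is read off from $p^dm_{ii}$ exactly as in Proposition~\ref{detdefect}. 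For $l=2$ the reduced form is $\bigl(\begin{smallmatrix}a&b\\b&c\end{smallmatrix}\bigr)$ with $0\le 2b\le a\le c$, and one bounds the number of rows by partitioning them according to which coordinates are non-zero, combining $\sum_i q_{is}^2=c_{ss}$ with the orthogonality $\sum_i q_{i1}q_{i2}=b$ and indecomposability. For $l=3$ the reduced ternary forms fall into the (partially) diagonal shapes, treated as in the binary case, and the genuinely non-diagonal shapes, where the third reduction inequality $2|\alpha_{ij}\pm\alpha_{ik}\pm\alpha_{jk}|\le\alpha_{ii}+\alpha_{jj}$ is needed to control the mixed sums.

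The main obstacle is precisely this residual range, above all $l=3$ with $\det C>p^d$. Here $\det C$ can be considerably larger than $p^d$, so the crude estimate of Theorem~\ref{main} is lost; and integrality together with indecomposability alone do \emph{not} force $k(B)\le p^d$, since one can build integral indecomposable $Q$ with more than $p^d$ rows whose Gram matrix is, say, congruent to $\operatorname{diag}(p^{d_1},p^d)$. It is therefore essential to feed in genuine block-theoretic constraints — the multiplicity-one of $p^d$ among the elementary divisors, the height information carried by $p^dm_{ii}$, and the identity $\tr M=l(B)$ — which, together with the reduced-form inequalities, eliminate the offending configurations. Organising this finite but delicate analysis, and in particular ruling out extra rows in the non-diagonal ternary shapes, is where the real work lies.
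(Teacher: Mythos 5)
There is a genuine gap: your proposal settles only the easy case $\det C=p^d$ (correctly, via Theorem~\ref{main}) and then merely describes a programme for the hard case $\det C>p^d$, conceding yourself that ``the real work lies'' there. Worse, the programme as stated cannot be carried out: since $\det C=p^{d+e+f}$ is unbounded as $d$ grows, Minkowski reduction of $C$ does \emph{not} leave ``finitely many resulting shapes'' to dispose of, so there is no finite case analysis of reduced ternary forms available in general. The paper avoids this trap by extracting \emph{analytic} bounds valid for all configurations: it works with the dual form $\widetilde{C}:=p^dC^{-1}$ rather than $C$, uses Robinson \cite[Corollary~2.5]{RobinsonNumber} to assume $\widetilde{C}$ does not represent $1$, and Brauer's theorem \cite[Theorem~V.9.17]{Feit} to assume it represents $2$ (if its minimum were $\ge 3=l(B)$, one would be done at once -- this is exactly the mechanism of Theorem~\ref{majorsub}, which your proposal never invokes). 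This pins the first cofactor of $C$ to $2p^{e+f}$, whereupon Barnes's inequality \cite{Barnes} for the reduced binary complement gives $c_{22}+c_{33}\le p^e+2p^f$, and Plesken's positive-semidefiniteness result \cite[Proposition~2.2]{Plesken} bounds the number $\alpha$ of rows of $Q$ of type $(*,0,0)$ by $p^d/2$. Together these yield $k(B)\le \alpha+c_{22}+c_{33}\le p^d/2+3p^f$, which settles everything except the residual cases $p=3$, $f=d-1$ and $p=5$, $e=f=d-1$; only there do the elementary divisors of $\widetilde{C}$ force a bounded determinant, so that the Brandt--Intrau--Schiemann tables \cite{Nebe} and \cite[Theorem~A]{KuelshammerWada} (plus contribution-matrix arguments) can finish.

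Two further points. First, your reduction to $l(B)=3$ should not be re-proved by hand for $l(B)\le 2$: the paper quotes \cite[Corollaries~5 and 7]{OlssonIneq}, which also dispose of $p=2$ when $l(B)=3$ -- a case your proposal does not address at all. Second, the block-theoretic inputs you list (the trace identity $\tr M=l(B)$, heights via $p^dm_{ii}$, indecomposability) are indeed among the tools the paper uses in the residual cases, but by themselves they do not ``eliminate the offending configurations''; the decisive inputs you are missing are precisely Robinson's and Brauer's representation conditions on $\widetilde{C}$, Barnes's reduction inequality, and Plesken's bound, without which no proof emerges from your outline.
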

\begin{proof}
By \cite[Corollaries~5 and 7]{OlssonIneq} we may assume that $l(B)=3$ and $p\ge 3$. Let $C=(c_{ij})$ be the Cartan matrix of $B$ (up to basic sets). Let $p^e\le p^f<p^d$ be the elementary divisors of $C$. We consider $\widetilde{C}:=p^dC^{-1}$. By \cite[Corollary~2.5]{RobinsonNumber}, we may assume that $\widetilde{C}$, considered as a quadratic form, does not represent the number $1$. On the other hand, by \cite[Theorem V.9.17]{Feit}, we may assume that $\widetilde{C}$ represents $2$. Thus, after changing the basis set, we may assume that the first cofactor of $C$ is $c_{22}c_{33}-c_{23}^2=2p^{e+f}$. 
Let $\overline{c}_{ij}:=3^{-e}c_{ij}\in\mathbb{Z}$. 
We may assume that the matrix $\bigl(\begin{smallmatrix}
\overline{c}_{22}&\overline{c}_{23}\\\overline{c}_{23}&\overline{c}_{33}
\end{smallmatrix}\bigr)$ is reduced as quadratic form (this will not change the cofactor).
By \cite{Barnes}, we have $4\overline{c}_{22}\overline{c}_{33}-\overline{c}_{22}^2=3\overline{c}_{22}\overline{c}_{33}+\overline{c}_{22}(\overline{c}_{33}-\overline{c}_{22})\le 8p^{f-e}$ and
\[\overline{c}_{22}+\overline{c}_{33}\le \frac{5}{4}\overline{c}_{22}+\frac{2p^{f-e}}{\overline{c}_{22}}=:f(\overline{c}_{22}).\]
Moreover, $\overline{c}_{22}\le 2\sqrt{2p^{f-e}/3}$. Now $f$ is a convex function on the interval $[1,2\sqrt{2p^{f-e}/3}]$ which assumes its maximum on one of the two borders. One can show that $f(1)\ge f(2\sqrt{2p^{f-e}/3})$ unless $p^{f-e}=1$ (and then $\overline{c}_{22}=1$). 
Therefore in any case,
\[c_{22}+c_{33}\le p^e\lfloor f(1)\rfloor=p^e+2p^f.\]
Let $Q$ be the decomposition matrix of $B$, and let $\alpha$ be the number of rows of $Q$ of the form $(*,0,0)$. It is easy to see that
\[k(B)\le \alpha+c_{22}+c_{33}\le \alpha+p^e+2p^f.\]
By \cite[Proposition~2.2]{Plesken} the matrix $p^d1_{k(B)}-Q\widetilde{C}Q^\textnormal{T}$ is positive semidefinite. Let $Q_1$ be the submatrix of $Q$ consisting only of the rows of type $(*,0,0)$. Then also $L:=p^d1_\alpha-Q_1\widetilde{C}Q_1^\textnormal{T}=p^d1_\alpha-2Q_1Q_1^\textnormal{T}$ is positive semidefinite. In particular, 
\[0\le\det L=\det(1_3-2p^{-d}Q_1^\textnormal{T}Q_1)p^{\alpha d}\le (1-2p^{-d}\alpha)p^{\alpha d}\]
and $\alpha\le p^d/2$ (see \cite[Theorem~18.1.1]{Harville}).
Hence, we have proved that
\[k(B)\le \frac{p^d}{2}+p^e+2p^f\le \frac{p^d}{2}+3p^f.\] 
In order to show $k(B)\le p^d$ it suffices to handle the cases $p=3$, $f=d-1$ and $p=5$, $e=f=d-1$. We consider the latter case first. Then $\widetilde{C}$ has elementary divisors $1$, $5$, $5$. This allows only finitely many choices for $\widetilde{C}$ up to basic sets. By the Brandt-Intrau-Schiemann tables \cite{Nebe} it follows that
\begin{align*}
\widetilde{C}=\begin{pmatrix}
2&1&0\\1&3&0\\0&0&5
\end{pmatrix}&&\text{and}&&C=5^{d-1}\begin{pmatrix}
3&-1&0\\-1&2&0\\0&0&1
\end{pmatrix}.
\end{align*}
In this case the claim follows by \cite[Theorem~A]{KuelshammerWada}. 

Finally, suppose that $p=3$ and $f=d-1$. Then $\widetilde{C}$ has elementary divisors $1$, $3$, $3^{d-e}$. 
After replacing the basic set if necessary, we may assume that
\[\widetilde{C}=\begin{pmatrix}
2&1&\epsilon\\
1&a&b\\\epsilon&b&c
\end{pmatrix}.\]
with $\epsilon\in\{0,1\}$ and $2|b|\le\min\{a,c\}$ (but not necessarily $a\le c$). 
Assume first that $b=0$. Since the greatest common divisor of all the $2\times 2$ minors of $\widetilde{C}$ equals $3$ (see \cite[Theorem~9.64]{Rotman}), we get $\epsilon=0$. Then, $a\in\{2,(3^{d-e}-1)/2\}$. In the second case, the claim follows from \cite[Theorem~A]{KuelshammerWada}. Hence, we may assume that
\begin{align*}
\widetilde{C}=\begin{pmatrix}
2&1&.\\
1&2&.\\
.&.&3^{d-e}
\end{pmatrix}&&\text{and}&&
C=3^e\begin{pmatrix}
2\cdot 3^{d-e-1}&-3^{d-e-1}&.\\
-3^{d-e-1}&2\cdot 3^{d-e-1}&.\\
.&.&1
\end{pmatrix}.
\end{align*}
If no row of the decomposition matrix of $B$ has type $(0,0,*)$, then we are done by \cite{KuelshammerWada}. Hence, let $\chi\in\Irr(B)$ whose corresponding row has the form $q_\chi=(0,0,*)$. Let $M:=(m_{ij})=Q\widetilde{C}Q^\textnormal{T}$ be the contribution matrix of $B$ (strictly speaking, multiplied by $p^d$). Since $\tr M=3^dl(B)=3^{d+1}$ (see \cite[Theorem~V.9.4(iii)]{Feit}), we may assume that there is a row $q_\psi$ of $Q$ ($\psi\in\Irr(B)$) such that $q_\psi\widetilde{C}q_\psi^\textnormal{T}=2$. It is easy to see that $q_\psi$ has the form $q_\psi=(*,*,0)$. This implies $m_{\chi\psi}=0$, and $\psi$ has positive height by \cite[Theorem~V.9.5]{Feit}. However, this gives the contradiction $q_\psi\widetilde{C}q_\psi^\textnormal{T}\ge 9$ (see \cite[Theorem~V.9.4(iv)]{Feit}).

Therefore, we are left with the case $b\ne 0$. Here, by the Brandt-Intrau-Schiemann tables we may assume that $\det\widetilde{C}\ge 81$, i.\,e. $e+1<f=d-1$. 
Since the greatest common divisor of all the $2\times 2$ minors of $\widetilde{C}$ equals $3$, we get $|2b-\epsilon|\ge 3$.
Hence, there exists a sign $\delta=\pm1$ such that $|1+\epsilon+\delta b|\ge 3$. The reduction theory gives $a,c\ge 4$ (observe that apart from interchanging $a$ and $c$, we may assume that $\widetilde{C}$ is reduced). 
Moreover, $2a-1\equiv 0\pmod{3}$ and $a\ge 5$. Similarly, $c\ge 5$. By \cite{Barnes}, $ac\le\det\widetilde{C}=3^{d-e+1}$. This shows
\[a+c\le 5+\frac{3^{d-e+1}}{5}.\]
For the entries of $C$ we get $c_{22}=3^{e-1}(2c-\epsilon^2)\le3^{e-1}\cdot 2c$, $c_{33}=3^{e-1}(2a-1)$ and $|c_{23}|=3^{e-1}|2b-\epsilon|\ge 3^e$. Hence,
\[c_{22}+c_{33}-|c_{23}|\le 3^{e-1}\Bigl(6+\frac{2}{5}3^{d-e+1}\Bigr).\]
Since $3^{d-e+1}\ge 81$, we deduce $c_{22}+c_{33}-|c_{23}|\le 3^d/2$.
Now the argument in the first part of the proof in combination with \cite{KuelshammerWada} yields
$k(B)\le\alpha+c_{22}+c_{33}-|c_{23}|\le 3^d$, and we are done.
\end{proof}

Let $B$ be a counterexample for Brauer's $k(B)$-Conjecture. Then the decomposition matrix $Q$ of $B$ fulfills the following properties:
\begin{enumerate}[(i)]
\item $Q\in\mathbb{Z}^{k\times l}_{\ge0}$, 
\item no row of $Q$ vanishes,
\item all elementary divisors of $Q$ equal $1$,
\item $Q$ is indecomposable,
\item $Q^\textnormal{T}Q$ has a unique largest elementary divisor which is a power of a prime $p$, say $p^d$,
\item every diagonal entry of $(m_{ij}):=p^dQ(Q^\textnormal{T}Q)^{-1}Q^\textnormal{T}$ is either divisible by $p^2$ or not divisible by $p$,
\item if $m_{ij}=0$, then $p^2\mid m_{ii}$ and $p^2\mid m_{jj}$,
\item $k>p^d$.
\end{enumerate}

We were unable to find any matrix $Q$ with these constraints. Thus, there is some hope that Brauer's Conjecture follows from matrix theory.

\section{Major subsections}
In this section we replace the Cartan matrix of a block $B$ by the Cartan matrix of a major $B$-subsection. Recall that a $B$-subsection $(u,b)$ is \emph{major} if $b$ and $B$ have the same defect.
This is always the case for blocks with abelian defect groups.

\begin{Theorem}\label{majorsub}
Let $B$ be a $p$-block of a finite group $G$. Let $(u,b)$ be a major $B$-subsection such that $|\langle u\rangle|=p^r$ and $b$ has defect $d$ and Cartan matrix $C$. If $\det(p^{-r}C)=p^{d-r}$, then Brauer's $k(B)$-Conjecture holds for $B$.
\end{Theorem}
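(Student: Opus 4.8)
The plan is to replace the decomposition matrix of $B$ by the \emph{generalized} decomposition matrix attached to the subsection $(u,b)$ and to run an argument parallel to Theorem~\ref{main}, but over the cyclotomic ring $\mathbb{Z}[\zeta]$ with $\zeta$ a primitive $p^r$-th root of unity. Let $Q_u=(d^u_{\chi\phi})$ be the matrix of generalized decomposition numbers, with rows indexed by $\chi\in\Irr(B)$ and columns by $\phi\in\operatorname{IBr}(b)$; by Brauer's theory $Q_u\in\mathbb{Z}[\zeta]^{k(B)\times l(b)}$ has rank $l(b)$ and satisfies the orthogonality relation $Q_u^*Q_u=C$, where $Q_u^*$ denotes the conjugate-transpose. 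The first step is a local reduction: since $u$ is central in $\C_G(u)$, the block $b$ dominates a block $\overline b$ of $\C_G(u)/\langle u\rangle$ with $l(\overline b)=l(b)$, defect $d-r$, and Cartan matrix $\overline C$ satisfying $C=p^r\overline C$. Hence the hypothesis reads $\det\overline C=p^{d-r}$, and the crucial consequence is that $p^dC^{-1}=p^{d-r}\overline C^{-1}=\det(\overline C)\,\overline C^{-1}=\operatorname{adj}(\overline C)$ is a positive-definite \emph{integral} matrix.

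Next I would exploit the contribution matrix $M=(m_{\chi\psi}):=p^dQ_uC^{-1}Q_u^*$. As $Q_u^*Q_u=C$, the matrix $p^{-d}M=Q_uC^{-1}Q_u^*$ is a Hermitian idempotent of rank $l(b)$, so $\tr M=p^dl(b)$; thus the non-negative real numbers $m_{\chi\chi}=v_\chi\operatorname{adj}(\overline C)v_\chi^*$ (where $v_\chi$ is the $\chi$-row of $Q_u$) sum to $p^dl(b)$. The goal is the pointwise lower bound $m_{\chi\chi}\ge l(b)$ for every $\chi$ with $v_\chi\ne0$: granting it, the number of such $\chi$ is at most $p^dl(b)/l(b)=p^d$, and since for a \emph{major} subsection every $\chi\in\Irr(B)$ has $v_\chi\ne0$ (the generalized decomposition numbers of a major subsection detect all irreducible characters), this yields $k(B)\le p^d$.

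The heart of the proof, and the main obstacle, is the pointwise bound $m_{\chi\chi}\ge l(b)$. If $v_\chi$ had rational-integer entries this would be immediate from the second assertion of Lemma~\ref{schwer} applied to the decomposition matrix $\overline Q$ of $\overline b$ (which is indecomposable by Proposition~\ref{con}), since then $m_{\chi\chi}=\det(\overline C)\,v_\chi\overline C^{-1}v_\chi^{\mathrm T}\ge l(b)$. The difficulty is that $v_\chi\in\mathbb{Z}[\zeta]^{l(b)}$. I would expand $v_\chi$ in a $\mathbb{Z}$-basis of $\mathbb{Z}[\zeta]$ and pass to the Galois average $\tfrac{1}{\phi(p^r)}\tr_{\mathbb{Q}(\zeta)/\mathbb{Q}}(m_{\chi\chi})$, which has the same total sum $p^dl(b)$; this turns the estimate into a minimum problem for the integral quadratic form $\operatorname{adj}(\overline C)\otimes R$ on the lattice $\mathbb{Z}^{l(b)}\otimes\mathbb{Z}[\zeta]$, where $R$ is the Gram matrix of $\mathbb{Z}[\zeta]$ under the cyclotomic trace form. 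One minimum factor is $\ge l(b)$ by Lemma~\ref{schwer} and the other equals $\phi(p^r)$, but the minimum of a tensor product of lattices need not be the product of the minima; controlling this interaction — using the special structure of $R$ for prime-power conductor together with the fact that $\operatorname{adj}(\overline C)$ arises from the indecomposable matrix $\overline Q$ — is exactly where the real work lies. I expect this cyclotomic refinement of Lemma~\ref{schwer} to be the decisive technical step, after which the counting argument of the second paragraph closes the proof.
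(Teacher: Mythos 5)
Your first paragraph reproduces the paper's proof exactly: pass to the dominated block $\overline{b}$ of $\C_G(u)/\langle u\rangle$ with $l(\overline{b})=l(b)$ and Cartan matrix $\overline{C}=p^{-r}C$ of determinant $p^{d-r}$, and apply Proposition~\ref{con} together with the second assertion of Lemma~\ref{schwer} to the ordinary (integral) decomposition matrix of $\overline{b}$ to obtain
\[\min\{p^d x C^{-1}x^{\textnormal{T}}: 0\ne x\in\mathbb{Z}^{l(b)}\}=\min\{\det(\overline{C})\,x\overline{C}^{-1}x^{\textnormal{T}}: 0\ne x\in\mathbb{Z}^{l(b)}\}\ge l(b).\]
At this point the paper finishes in one line by citing Brauer's theorem \cite[Theorem~V.9.17]{Feit}, whose content is precisely what you still need: for a \emph{major} subsection $(u,b)$, a bound $p^dxC^{-1}x^{\textnormal{T}}\ge m$ for all $0\ne x\in\mathbb{Z}^{l(b)}$ (rational integer vectors only) implies $k(B)\le p^dl(b)/m$. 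Your second and third paragraphs are an attempt to reprove this theorem, and that attempt contains a genuine gap.

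The counting skeleton you set up is correct and is indeed how Brauer's proof begins ($Q_u^*Q_u=C$, all rows of $Q_u$ nonzero because $(u,b)$ is major, $\tr M=p^dl(b)$). But the decisive inequality --- after Galois averaging, that $\phi(p^r)^{-1}\operatorname{Tr}_{\mathbb{Q}(\zeta)/\mathbb{Q}}(m_{\chi\chi})\ge l(b)$ for each $\chi$ --- is exactly the step you leave open, and your own diagnosis of the obstruction is accurate: minima of tensor products of lattices need not multiply, so knowing that one factor has minimum $\ge l(b)$ (Lemma~\ref{schwer}) and the other has minimum $\phi(p^r)$ proves nothing by itself. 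What closes the argument is not general lattice theory but the concrete shape of the cyclotomic Gram matrix. For $r=1$ one has $R=pI-J$ ($J$ the all-ones matrix), so writing $v_\chi=\sum_i a^{(i)}\zeta^i$ with $a^{(i)}\in\mathbb{Z}^{l(b)}$, $q(x):=p^dxC^{-1}x^{\textnormal{T}}$, and letting $s\ge 1$ be the number of nonzero vectors $a^{(i)}$, the Cauchy--Schwarz inequality gives
\[\operatorname{Tr}_{\mathbb{Q}(\zeta)/\mathbb{Q}}(m_{\chi\chi})=p\sum_i q(a^{(i)})-q\Bigl(\sum_i a^{(i)}\Bigr)\ge(p-s)\sum_i q(a^{(i)})\ge (p-s)\,s\,l(b)\ge(p-1)\,l(b);\]
for $r>1$ the matrix $R$ is block diagonal with blocks $p^{r-1}(pI-J)$ of size $p-1$, and the same estimate applies block by block, giving the lower bound $\phi(p^r)l(b)$. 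Summing over $\chi$ then yields $k(B)\le p^d$. This computation (which is Brauer's) is the missing piece of your write-up; alternatively, and far more economically, simply quote \cite[Theorem~V.9.17]{Feit} after your first paragraph, as the paper does, which renders your remaining two paragraphs unnecessary.
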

\begin{proof}
It is well known that $b$ dominates a block $\overline{b}$ of $\C_G(u)/\langle u\rangle$ with defect $d-r$ and Cartan matrix $\overline{C}:=p^{-r}C$. 
Let $\overline{Q}$ be the decomposition matrix of $\overline{b}$. Then by Proposition~\ref{con}, $\overline{Q}$ is indecomposable. Therefore, Lemma~\ref{schwer} implies
\[\min\{p^dxC^{-1}x^{\textnormal{T}}:0\ne x\in\mathbb{Z}^l\}=\min\{\det(\overline{C})x\overline{C}^{-1}x^{\textnormal{T}}:0\ne x\in\mathbb{Z}^l\}\ge l(\overline{b})=l(b).\]
Now the claim follows from a result by Brauer (see \cite[Theorem~V.9.17]{Feit}).
\end{proof}

Theorem~\ref{majorsub} generalizes Brauer's argument for the case where $\overline{b}$ has cyclic defect group (see \cite[Lemma~VII.10.11]{Feit}). In general it is not true that
\[m=\min\{xp^dC^{-1}x^{\textnormal{T}}:0\ne x\in\mathbb{Z}^{l(B)}\}\ge l(B)\] 
for every block $B$ with defect $d$ and Cartan matrix $C$. A counterexample is given by the principal $2$-block of $Z_2^3\rtimes (Z_7\rtimes Z_3)$. Here, $m=4<5=l(B)$. This answers a question by Olsson (see \cite[Remark~G]{OlssonIneq}).

\begin{Definition}
A group $G$ acts \emph{freely} on a group $H$ if $G\le\Aut(H)$ and $\C_G(h)=1$ for all $1\ne h\in H$ \textup{(}i.\,e. $H\rtimes G$ is a Frobenius group whenever $G\ne 1$\textup{)}.
\end{Definition}

\begin{Corollary}\label{cor}
Let $B$ be a block with abelian defect group $D$ and inertial quotient $T$. Suppose that there exists an element $u\in D$ such that $\C_T(u)$ acts freely on $D/\langle u\rangle$. Then $k(B)\le |D|$.
\end{Corollary}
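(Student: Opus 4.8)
The plan is to apply Theorem~\ref{majorsub} to a suitable major subsection and to verify its determinant hypothesis by reducing to Fujii's criterion. Since $D$ is abelian, \emph{every} $B$-subsection is major; in particular, for the given $u\in D$ with $|\langle u\rangle|=p^r$, the subsection $(u,b)$ with $b$ the Brauer correspondent of $B$ in $\C_G(u)$ is major, and $b$ has defect group $D$. The block $b$ dominates a block $\overline{b}$ of $\C_G(u)/\langle u\rangle$ with abelian defect group $\overline{D}:=D/\langle u\rangle$ and Cartan matrix $\overline{C}=p^{-r}C$; moreover the inertial quotient of $\overline{b}$ is $\C_T(u)$, acting on $\overline{D}$ exactly as in the statement of the hypothesis. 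The whole point is thus to prove $\det\overline{C}=|\overline{D}|=p^{d-r}$, which is precisely the numerical input required by Theorem~\ref{majorsub}.

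First I would show that $\overline{b}$ satisfies the hypothesis of Fujii's result~\cite{DetCartan}, namely that every non-trivial $\overline{b}$-subsection $(\overline{v},\overline{c})$ has $l(\overline{c})=1$. Because $\overline{D}$ is abelian we may take $\overline{v}\in\overline{D}$ with $\overline{v}\ne1$; then $\overline{c}$ has defect group $\C_{\overline{D}}(\overline{v})=\overline{D}$ and inertial quotient $\C_{\C_T(u)}(\overline{v})$. By hypothesis $\C_T(u)$ acts freely on $\overline{D}$, so $\C_{\C_T(u)}(\overline{v})=1$ for every $\overline{v}\ne1$. A block with abelian defect group and trivial inertial quotient is nilpotent, and hence $l(\overline{c})=1$ by the structure theorem of Brou\'e and Puig for nilpotent blocks.

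Consequently Fujii's result yields $\det\overline{C}=|\overline{D}|=p^{d-r}$, that is $\det(p^{-r}C)=p^{d-r}$. Applying Theorem~\ref{majorsub} to the major subsection $(u,b)$ then gives Brauer's $k(B)$-Conjecture for $B$, i.e. $k(B)\le p^d=|D|$, as desired.

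The main obstacle I anticipate is not the final assembly but the local bookkeeping: one must carefully justify that domination $b\rightsquigarrow\overline{b}$ and passage to a subsection $\overline{c}$ transport the inertial quotients to the iterated centralizers $\C_T(u)$ and $\C_{\C_T(u)}(\overline{v})$ respectively, and that ``abelian defect group with trivial inertial quotient'' genuinely forces nilpotency (and hence $l=1$). These are standard consequences of the theory of Brauer pairs and the Brou\'e--Puig characterization of nilpotent blocks, but they constitute the substantive input; once they are in place, the determinant computation and the invocation of Theorem~\ref{majorsub} are immediate.
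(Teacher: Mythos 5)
Your proposal is correct and follows essentially the same route as the paper's own proof: pass to the subsection $(u,b)$, use domination to get the block $\overline{b}$ of $\C_G(u)/\langle u\rangle$ with inertial quotient $\C_T(u)$, observe that freeness forces every non-trivial $\overline{b}$-subsection to have inertial index $1$ (hence $l=1$), apply Fujii's determinant criterion, and conclude via Theorem~\ref{majorsub}. The only cosmetic difference is that you justify $l(\beta)=1$ explicitly through nilpotency (Brou\'e--Puig), which the paper leaves implicit.
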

\begin{proof}
We consider a $B$-subsection $(u,b)$. Then $b$ has defect group $D$ and inertial quotient $\C_T(u)$. As usual $b$ dominates a block $\overline{b}$ with defect group $D/\langle u\rangle$ and inertial quotient $\C_T(u)$.
By the hypothesis, all non-trivial $\overline{b}$-subsections $(v,\beta)$ have inertial index $1$. In particular, $l(\beta)=1$. By a result by Fujii~\cite[Corollary~1]{DetCartan}, it follows that $\det \overline{C}=|D/\langle u\rangle|$ where $\overline{C}$ is the Cartan matrix of $\overline{b}$. Now Theorem~\ref{majorsub} implies the claim.
\end{proof}

The condition in Corollary~\ref{cor} is equivalent to $\C_T(u)\cap\C_T(v)=1$ for all $v\in D\setminus\langle u\rangle$. By a result of Halasi and Podoski~\cite[Corollary~1.2]{base2}, it is known that there are always \emph{some} elements $u,v\in D$ such that $\C_T(u)\cap\C_T(v)=1$.

For abelian defect groups, Corollary~\ref{cor} is all what one can expect to deduce from Theorem~\ref{majorsub}. This can be seen from the following example: Let $D$ be an abelian $p$-group and let $T\le\Aut(D)$ be a $p'$-group which does not act freely on $D$. Then by \cite[Theorem~IV.3.11]{Feit}, the Cartan matrix $C$ of the principal $p$-block of $D\rtimes T$ satisfies $\det C>|D|$.

We remark also that, by \cite[Theorem 1.2]{FrobeniusInertial}, the stable center of the block $\overline{b_u}$ in the proof of Corollary~\ref{cor} is a symmetric algebra.

\begin{Corollary}[Brauer, see {\cite[Theorem~VII.10.13]{Feit}}]\label{cor2}
Let $B$ be a block with abelian defect group of rank at most $2$. Then Brauer's $k(B)$-Conjecture holds for $B$.
\end{Corollary}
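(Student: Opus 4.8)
The plan is to deduce this directly from Corollary~\ref{cor}: for a block $B$ with abelian defect group $D$ and inertial quotient $T$, it suffices to exhibit a single element $u\in D$ for which $\C_T(u)$ acts freely on $D/\langle u\rangle$. So the whole task is the group-theoretic one of producing a good $u$, and I would look for one that makes the quotient $D/\langle u\rangle$ as structured as possible.

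The key observation is that for $D$ abelian of rank at most $2$ we can force $D/\langle u\rangle$ to be \emph{cyclic}. Concretely, I would choose $u$ to be an element of maximal order in $D$ (equivalently, of order equal to the exponent of $D$). A standard fact about finite abelian $p$-groups is that such a $u$ generates a direct factor, so $D=\langle u\rangle\times K$ with $K$ of rank at most $\operatorname{rank}(D)-1\le 1$; hence $D/\langle u\rangle\cong K$ is cyclic. This is exactly where the rank hypothesis enters, and it is the point I would flag as essential: for rank $\ge 3$ the quotient need not be cyclic and the whole argument collapses.

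It then remains to verify that $\C_T(u)$ acts freely on the cyclic $p$-group $Q:=D/\langle u\rangle$, and this I expect to be the only delicate step. Write $S:=\C_T(u)$, a $p'$-group which fixes $u$ and therefore acts on $Q$. First I would show the action is faithful: if $s\in S$ acts trivially on $Q$ then $[D,s]\le\langle u\rangle$, and since $s$ centralizes $\langle u\rangle$ we get $[D,s,s]\le[\langle u\rangle,s]=1$; the coprime commutator identity $[D,\langle s\rangle]=[D,\langle s\rangle,\langle s\rangle]$ then forces $[D,s]=1$, so $s=1$. Second, a faithful $p'$-group of automorphisms of a cyclic $p$-group is fixed-point-free: a nontrivial $p'$-automorphism of $Q\cong Z_{p^m}$ is multiplication by a unit $c$ with $c\not\equiv 1\pmod p$ (otherwise $c$ would be a $p$-element), whence $c-1$ is invertible modulo $p^m$ and the only fixed point is trivial; for $p=2$ one checks separately that the relevant $p'$-part of $\Aut(Q)$ is trivial, so freeness holds vacuously. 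Thus $S$ acts freely on $Q$ in the sense of the Definition, and Corollary~\ref{cor} yields $k(B)\le|D|$. The appeal of this route is that it is uniform in $T$: once $D/\langle u\rangle$ is cyclic, no case analysis on the structure of the inertial quotient is needed.
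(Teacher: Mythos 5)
Your proof is correct and takes essentially the same route as the paper: fix $u\in D$ of maximal order, observe that the rank hypothesis makes $D/\langle u\rangle$ cyclic, note that $\C_T(u)$ acts freely on this cyclic quotient, and apply Corollary~\ref{cor}. The only difference is one of detail, not of substance --- the paper asserts the freeness claim in a single sentence, whereas you verify it explicitly (faithfulness via the coprime commutator identity, and fixed-point-freeness of coprime automorphisms of a cyclic $p$-group), which is a sound filling-in of the same argument.
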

\begin{proof}
Let $D$ be a defect group of $B$, and let $T$ be the inertial quotient of $B$. We fix an element $u\in D$ of maximal order. Then $\C_T(u)$ acts freely on the cyclic group $D/\langle u\rangle$. Hence, the claim follows from Corollary~\ref{cor}.
\end{proof}

Compared to Brauer's original proof, the proof of Corollary~\ref{cor2} does not depend on Dade's deep theory of cyclic defect groups.

As another application of Corollary~\ref{cor} we give a more concrete example: Let $B$ be a block with defect group $Z_2^7$ and inertial quotient $Z_{127}\rtimes Z_7$. Then Brauer's $k(B)$-Conjecture does not follow from previous results by the present author in \cite{Sbrauerfeit}. However, Corollary~\ref{cor} applies in this situation.

We use the opportunity to provide a dual version of \cite[Lemma~5]{Sbrauerfeit} which makes use of a recent result by Keller-Yang~\cite{KellerYang}.

\begin{Proposition}
Let $B$ be a block with abelian defect group $D$ and inertial quotient $T$. If $|T'|\le 4$, then Brauer's $k(B)$-Conjecture holds for $B$.
\end{Proposition}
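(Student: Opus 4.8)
The plan is to deduce the Proposition from Corollary~\ref{cor}, so that the whole argument reduces to a statement about the coprime action of $T$ on $D$. By the reformulation recorded immediately after Corollary~\ref{cor}, it suffices to exhibit a single element $u\in D$ with $\C_T(u)\cap\C_T(v)=1$ for every $v\in D\setminus\langle u\rangle$, equivalently an element $u$ such that $\C_T(u)$ acts freely on $D/\langle u\rangle$. Since $T$ is a $p'$-group acting faithfully and coprimely on the abelian $p$-group $D$, the task becomes purely one about coprime linear actions: produce such a distinguished point $u$ under the assumption $|T'|\le 4$.

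First I would observe that it is more than enough to produce a \emph{regular orbit}, i.e.\ an element $u$ with $\C_T(u)=1$; for then the trivial group acts freely on $D/\langle u\rangle$ and Corollary~\ref{cor} applies directly. By a standard coprime-action reduction, replacing $D$ by its Frattini quotient $D/\Phi(D)$ on which $T$ still acts faithfully, I may assume that $D$ is a faithful $\mathbb{F}_pT$-module; any lift of a regular-orbit point of $D/\Phi(D)$ then has trivial centralizer in $D$ as well. The hypothesis $|T'|\le 4$ makes $T$ metabelian and tightly restricts its structure, which is precisely the regime in which regular orbits are forced.

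The key input is the theorem of Keller–Yang~\cite{KellerYang}: a coprime linear group whose derived subgroup has order at most $4$ possesses a regular orbit, save for a short, explicitly describable family of exceptional modules. For every non-exceptional pair $(T,D)$ this yields the required $u$ at once. I emphasise that Halasi–Podoski~\cite[Corollary~1.2]{base2} is \emph{not} sufficient here: it only guarantees \emph{some} pair $u,v$ with $\C_T(u)\cap\C_T(v)=1$, whereas Corollary~\ref{cor} demands one fixed $u$ that works against all $v\notin\langle u\rangle$. Closing this gap by means of the stronger regular-orbit statement is exactly what makes the present Proposition the dual of \cite[Lemma~5]{Sbrauerfeit}.

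The main obstacle is the treatment of the exceptional configurations in which $T$ has no regular orbit on $D$. There I would drop the demand $\C_T(u)=1$ and fall back on the weaker condition that $\C_T(u)$ merely act freely on $D/\langle u\rangle$, which by the equivalence above amounts to requiring that every nontrivial $t\in\C_T(u)$ satisfy $\C_D(t)=\langle u\rangle$. Choosing $u$ appropriately (for instance of maximal order in a well-chosen $T$-invariant summand of $D$), one verifies directly that every nontrivial element of $\C_T(u)$ has all its fixed points inside $\langle u\rangle$. Since the exceptions form a bounded list governed by the small derived subgroup and the coprimality restriction, each can be settled by a finite check on the possible modules, after which Corollary~\ref{cor} gives $k(B)\le|D|$ in all cases and completes the proof.
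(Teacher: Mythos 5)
Your argument has a genuine gap at the step you yourself call the key input. \cite[Theorem~1.1]{KellerYang} is not a regular-orbit theorem with an explicitly describable family of exceptions; it states that a group acting faithfully and coprimely on a finite group has an orbit of size at least the index of its derived subgroup. Under $|T'|\le 4$ this yields only an element $u\in D$ with $|\C_T(u)|\le|T'|\le 4$, not $\C_T(u)=1$, and there is no list of exceptional modules to fall back on. Regular orbits can genuinely fail in this setting: the dihedral group $T$ of order $8$ acts faithfully and coprimely on $D=Z_3^2$ with $|T'|=2$, and every non-zero vector has stabilizer of order exactly $2$. Moreover, an element with small centralizer need not satisfy the hypothesis of Corollary~\ref{cor}: if $T=\langle t\rangle$ of order $2$ acts on $D=Z_p^3$ ($p$ odd) by inverting the first coordinate and fixing the other two, then $u=(0,1,0)$ has $|\C_T(u)|=2$, yet $\C_T(u)$ fixes the non-trivial image of $(0,0,1)$ in $D/\langle u\rangle$, so it does not act freely there. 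Thus neither half of your dichotomy is supported: the cited theorem does not produce regular orbits, and your treatment of the remaining cases (``choosing $u$ appropriately \ldots one verifies directly'', ``settled by a finite check'') is exactly the argument that is missing, for a family of configurations that is neither finite nor identified anywhere.

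The paper's own proof avoids Corollary~\ref{cor} entirely. It extracts from \cite[Theorem~1.1]{KellerYang} precisely what that theorem gives, namely an element $u\in D$ with $|\C_T(u)|\le 4$, and then finishes with \cite[Lemma~4]{Sbrauerfeit}, a block-theoretic result which (as used here) converts the existence of a subsection whose inertial index is at most $4$ into $k(B)\le|D|$. That lemma is the correct bridge from ``small centralizer'' to Brauer's conjecture; the free-action hypothesis of Corollary~\ref{cor} is strictly stronger than anything the assumption $|T'|\le 4$ provides, which is why your reduction to that corollary cannot be completed.
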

\begin{proof}
As usual, the action of $T$ on $D$ is faithful and coprime. By \cite[Theorem~1.1]{KellerYang} there exists an element $u\in D$ such that $\lvert\C_T(u)\rvert\le 4$. Now the claim follows from \cite[Lemma~4]{Sbrauerfeit}.
\end{proof}

\section*{Acknowledgment}
This work is supported by the Carl Zeiss Foundation and the Daimler and Benz Foundation. The author thanks Gabriele Nebe for pointing out a Magma implementation of Kneser's algorithm for quadratic forms.

\begin{center}
Benjamin Sambale\\
Institut für Mathematik\\
Friedrich-Schiller-Universität\\
07743 Jena\\
Germany\\
\href{mailto:benjamin.sambale@uni-jena.de}{benjamin.sambale@uni-jena.de}
\end{center}
\end{document}